\newcounter{theorem}
\numberwithin{equation}{section}
\newtheorem{theorem}{Theorem}[section]
\newtheorem{corollary}[theorem]{Corollary}
\newtheorem{definition}[theorem]{Definition}
\newtheorem{lemma}[theorem]{Lemma}
\newtheorem{proposition}[theorem]{Proposition}
\newenvironment{proof}[1][Proof]{\textbf{#1.} }
{\ \Box\smallskip}
\newcommand{\RR}{\mathbb{R}}
\newcommand{\half}{\frac{1}{2}}
\newcommand{\real}{\mathbb{R}}
\newcommand{\nat}{\mathbb{N}}
\newcommand{\A}{\mathcal{A}}
\newcommand{\cP}{{\cal P}}
\newcommand{\cV}{{\cal V}}
\newcommand{\cW}{{\cal W}}
\newcommand{\cU}{{\cal U}}
\newcommand{\uone}{u_{\tau}^1}
\newcommand{\uzero}{u_{\tau}^0}
\newcommand{\uktau}{u_{\tau}^k}
\newcommand{\xiktau}{\xi_{\tau}^k}
\newcommand{\fktau}{f_{\tau}^k}
\newcommand{\uktauminus}{u_{\tau}^{k-1}}
\newcommand{\uktauminuss}{u_{\tau}^{k-2}}
\newcommand{\uitau}{u_{\tau}^i}
\newcommand{\uitauminus}{u_{\tau}^{i-1}}
\newcommand{\untau}{u_{\tau}^n}
\newcommand{\xintau}{\xi_{\tau}^n}
\newcommand{\fntau}{f_{\tau}^n}
\newcommand{\untauminus}{u_{\tau}^{n-1}}
\newcommand{\untauminuss}{u_{\tau}^{n-2}}
\newcommand{\skalar}[1]{\left\langle #1 \right\rangle}
\newcommand{\dual}[2]{\left\langle #1 \right\rangle_{{#2}^*\times #2}}
\begin{document}

\title{\bf Convergence of double step scheme for a class of parabolic Clarke subdifferential inclusions  \thanks{\ Research supported by European Union's Horizon 2020 Research and Innovation Programme
		under the Marie Sklodowska-Curie Grant Agreement No 823731 CONMECH. }}

\author{
	Krzysztof Bartosz$^{\,1}$,  \
	Pawe{\l} Szafraniec$^{\,1}$, \ Jing Zhao$^{\,2}$
	\\ ~ \\
	{\small $^1$ Jagiellonian University, Faculty of Mathematics and Computer Science} \\
	{\small ul. \L{}ojasiewicza 6, 30348 Krakow, Poland} \\
{\small$^2$ College of Sciences, Beibu Gulf University,}\\
	 {\small Qinzhou, Guangxi 535000, China}	
}

\date{}
\maketitle \thispagestyle{empty}

\begin{center}
{\it The paper is dedicated to Professor Stanis\l{}aw Mig\'orski} \\
{\it on the occasion of his 60th birthday.}
\end{center}
\vskip 4mm

\noindent {\footnotesize{\bf Abstract.} 
In this paper we deal with a first order evolution inclusion involving a multivalued term generated by a Clarke subdifferential of a locally Lipschitz potential.  For this problem we construct a double step time-semidiscrete approximation, known as the Rothe scheme. We study a sequence of solutions of the semidiscrete approximate problems and provide its weak convergence to a limit element that is a solution of the original problem. 
\vskip 2mm \noindent {\bf Keywords:} Clarke subdifferential, parabolic inclusion, parabolic hemivariational inequality, double step Rothe method, weak convergence

\vskip 2mm

\noindent {\bf 2010 Mathematics Subject Classification:} \ 34A60, 34G25, 47J22, 49J27, 49J45, 49J52, 49J53, 65J05, 65J15, 65N06









\vskip 12mm
\section{Introduction}\label{Sec_1}
We consider a class of evolutionary inclusions of parabolic type. A multivalued term, which appears in our problem, has a form of Clarke subdifferential of a locally Lipschitz function. Such problems are closely related to so called hemivariational inequalities (HVIs) in the sense that the solution of HVI can be found by solving a corresponding inclusion, as is presented in Section \ref{sec_example}. HVIs are generalization of variational inequalities and they play very important role in modelling of various problems arising in mechanics, physics, and the engineering sciences.   

The theory of HVIs has been introduced by Pangiotopulos in 1980s \cite{P} in order to describe several important mechanical and engineering problems with nonmonotone phenomena in solid mechanics. The concept of HVIs is based on the notion of Clarke subdifferential of a locally Lipschitz functional that may be nonconvex - see \cite{Clarke}. The theory of HVIs has been rapidly developed in the last decades as is illustrated in \cite{DMP1, HMSBOOK, MOSBOOK} for example. For more recent results on theory and applications of HVIs see \cite{Rev2, Rev1}. In addition to the theory, numerical aspects of discussed problems began to be developed especially in the last few years. They involve various kinds of discrete schemes based on temporal and spatial discretization.
In our paper we deal with the first kind of discretization, known as Rothe method. We are motivated by \cite{Kalita2013}, in which Rothe method was used for the first time to solve an evolutionary inclusion involving Clarke subdifferential. In the quoted paper the time discretization technique based on the implicit (backward) Euler scheme was used. In this approach the time derivative of unknown function $u$ is approximated by a finite difference according to the role $u'(t_n)\simeq\frac{1}{\tau}(u^{n}-u^{n-1})$, where $\tau$ represents the time step. The result presented there has been developed in \cite{Kalita} by applying more general $\theta$-scheme. Both results deal with evolutionary inclusions of parabolic type. In \cite{BCKYZ, Bartosz_Sofonea_1}, Rothe method has been applied for more general parabolic problem, namely variational-hemivariational inequalities.  The techniques introduced  in \cite{Kalita2013, Kalita} has been successfully adapted to a second order evolutionary inclusion in Chapter 5 of \cite{HMSBOOK} and in \cite{Bartosz_1, Bartosz_2}.  We also refer to \cite{Han_J_Migorski_Zeng, Liu_Zeng_Migorski, Migorski_Zeng_1, Migorski_Zeng_2,  Peng_Xiao} for more result concerning Rothe method in analysis of various kinds of evolutionary hemivariational inequalities.\\
In this paper we deal with the parabolic Clarke inclusion which has been already studied in \cite{Kalita2013}. But this time we use two steps backward differential formula (BDF) based on the idea that the time derivative $u'(t_n)$ is approximated by the derivative of the unique second order polynomial that interpolates the approximate solution in three points, namely
\begin{align*}
u'(t_n)\simeq\frac{1}{\tau}\left(\frac{3}{2}\untau-2\untauminus+\frac{1}{2}\untauminuss\right).
\end{align*}
Hence, it seems to be more accurate then the standard implicit Euler scheme used in \cite{Kalita2013}, which involves only two points to approximate the time derivative.    
On the other hand, two steps BDF scheme is more complicated and its analysis requires more sophisticated technique than the one used in previous related papers. Our approach relies on techniques from \cite{Emmrich}, where BDF for non-Newtionian fluids was considered. Nevertheless we think that our efforts are fruitful and the presented result opens new perspectives in numerical analysis of nonlinear evolutionary inclusions.
 
The rest of the paper is structured as follows. In Section \ref{Sec_2} we introduce preliminary materials and recall basic results to be used letter.  In Section \ref{Sec_3} we formulate the problem that is an abstract evolution inclusion of parabolic type involving Clarke subdifferential. We also list all assumptions on the data of the problem. In Section \ref{Sec_4} we introduce a time semidiscrete Rothe scheme corresponding to the original problem and provide the existence result for the approximate one. Moreover we derive a-priori estimates for the approximate solution. In Section \ref{Sec_5} we provide the convergence result, which shows that the sequence of solutions of semidiscrete problem converges weakly to the solution of the original problem as the discretization parameter converges to zero. Finally, in Section \ref{sec_example}, we provide a simple example of parabolic boundary problem for which our theoretical result can be applied.

\section{Notation and preliminaries}\label{Sec_2}
In this section we introduce notation and recall several known results that will be used in the rest of the paper.

Let $X$ be a real normed space. Everywhere in the paper we will use the symbols $\|\cdot\|_X$, $X^*$ and $\dual{\cdot,\cdot}{X}$ to denote the norm in $X$, its dual space and the duality pairing of $X$ and $X^*$, respectively. Moreover, if $Y$ is a normed space and $f\in {\cal L}(X,Y)$, we will briefly write $\|f\|$ instead of $\|f\|_{{\cal L}(X,Y)}$ and we will use notation $f^*\colon Y^*\to X^*$ for the adjoint operator to $f$. 
We start with the definition of the Clarke generalized directional derivative and the Clarke subdifferential.

\begin{definition}\label{Def_Clarke_subdifferential}
Let $\varphi\colon X\to \RR$ be a locally Lipschitz function. The
Clarke generalized directional derivative of $\varphi$ at the
point $x\in X$ in the direction $v\in X$,  is defined by
\[ \varphi^0(x;v)=\limsup_{y\to x, \lambda \downarrow 0}
\frac{\varphi(y+\lambda v)-\varphi(y)}{\lambda}.\]
The Clarke subdifferential of $\varphi$ at
$x$ is a subset of $X^*$ given by
\[ \partial \varphi(x)=\{\,\xi\in X^*\,\,|\,\, \varphi^0(x;v)\geq
\dual{\xi,v}{X}\,\,\,\,\, \text{for all }\,\,  v\in X\,\}. \]
\end{definition}

In what follows, we recall the definition of pseudomonotone operator in both single-valued and multivalued cases.

\begin{definition}\label{Def_pseudomonotone_single}
 A single-valued operator $A\colon X\to X^*$ is called pseudomonotone
 if for any sequence $\{v_n\}_{n=1}^\infty\subset X$, $v_n\to v$
 weakly in $X$ and $\limsup_{n\to\infty}\dual{Av_n, v_n-v}{X}\leq
 0$
 imply that
 $\dual{Av, v-y}{X} \leq \liminf_{n\to\infty}\dual{Av_n, v_n-y}{X}$
 for every $y\in X$.
\end{definition}

\begin{definition}\label{Def_pseudomonotone_multi}
	A multivalued operator $A\colon X\to 2^{X^*}$ is called pseudomonotone if the following conditions hold:
	\begin{itemize}
		\item[ 1)] $A$ has values which are nonempty, bounded, closed and convex.
		\item[ 2)] $A$ is upper semicontinuous (usc, in short) from every finite dimensional subspace of $X$
		into $X^{\ast}$ endowed with the weak to\-po\-lo\-gy.
		\item[ 3)] For any sequence $\{v_n\}_{n=1}^\infty\subset X$ and
		any $v_n^{\ast} \in A(v_n)$, $v_n \rightarrow v$ weakly in $X$ and\\ $ \limsup_{n\rightarrow
			\infty} \dual{v_n^\ast, v_n - v}{X}\leq 0$ imply that for
		every $y \in X$ there exists $u(y) \in A(v)$ such that $\dual{
			u(y), v - y}{X}
		\leq \liminf_{n \rightarrow \infty} \dual{ v_n^{\ast}, v_n - y}{X}$.
	\end{itemize}
\end{definition}

The following two propositions provide an important class of pseudomonotone operators that will appear in the next section.
They correspond to Proposition 5.6 in \cite{HMSBOOK} and Proposition
1.3.68 in \cite{DMP2}, respectively.

\begin{proposition}\label{Prop_2.1}
	Let $X$ and $U$ be two reflexive Banach spaces and $\iota\colon  X\to
	U$ a linear, continuous and compact operator. Let $J\colon
	U\to\real$ be a locally Lipschitz functional and  assume that its Clarke
	subdifferential satisfies
	\begin{equation}\nonumber
	\|\xi\|_{U^*}\leq c(1+\|u\|_U)\quad\forall u\in U,\,\,\xi\in\partial J(u)
	\end{equation}
	with $c>0$. Then the multivalued operator $M\colon X\to 2^{X^*}$ defined by
	\begin{equation}\nonumber
	M(v)=\iota^*\partial J(\iota v) \quad \text{for all}\,\,\,  v\in X
	\end{equation}
	is pseudomonotone.
\end{proposition}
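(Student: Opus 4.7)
The plan is to verify, one by one, the three defining conditions of pseudomonotonicity from Definition \ref{Def_pseudomonotone_multi}, exploiting at each step the strong to weak compactness provided by $\iota$ together with standard properties of the Clarke subdifferential.

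First, I would verify condition 1). For every $v\in X$ the set $\partial J(\iota v)\subset U^*$ is nonempty, convex and weak-$*$ closed by general properties of the Clarke subdifferential of a locally Lipschitz function, and it is bounded in $U^*$ by the hypothesis $\|\xi\|_{U^*}\leq c(1+\|u\|_U)$. Since $\iota^*\colon U^*\to X^*$ is linear and continuous, the image $M(v)=\iota^*\partial J(\iota v)$ inherits nonemptiness, convexity and boundedness; for closedness, note that $\partial J(\iota v)$ is actually weakly compact in $U^*$ (bounded and weak-$*$ closed in the reflexive space $U^*$), and $\iota^*$ is weak-to-weak continuous, hence the image is weakly compact, in particular strongly closed.

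Next, for condition 2) I would take a finite dimensional subspace $F\subset X$ and use the sequential characterization of usc into $X^*_w$ on bounded sets. Suppose $v_n\to v$ in $F$ and $v_n^*=\iota^*\xi_n$ with $\xi_n\in\partial J(\iota v_n)$. The continuity of $\iota$ gives $\iota v_n\to \iota v$ strongly in $U$, and the growth condition bounds $\{\xi_n\}$ in $U^*$. By reflexivity, along a subsequence $\xi_n\rightharpoonup \xi$ in $U^*$. The closed graph property of $\partial J$ with respect to the strong $\times$ weak topology (which follows from the upper semicontinuity of $u\mapsto J^0(u;w)$ for every fixed direction $w$) yields $\xi\in\partial J(\iota v)$, so that $v_n^*=\iota^*\xi_n\rightharpoonup \iota^*\xi\in M(v)$ in $X^*$. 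A standard Urysohn subsequence argument then promotes this to upper semicontinuity.

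For condition 3), assume $v_n\rightharpoonup v$ in $X$ and $v_n^*=\iota^*\xi_n\in M(v_n)$ satisfy $\limsup_{n\to\infty}\dual{v_n^*,v_n-v}{X}\leq 0$. Here the compactness of $\iota$ is crucial: it forces $\iota v_n\to \iota v$ strongly in $U$. The growth bound then gives boundedness of $\{\xi_n\}$ in $U^*$, so along a subsequence $\xi_n\rightharpoonup \xi$ weakly in $U^*$. The same closed graph argument as above shows $\xi\in\partial J(\iota v)$, and I would define the selector $u(y)=\iota^*\xi\in M(v)$, which is independent of $y$. Then for any $y\in X$,
\begin{align*}
\dual{v_n^*,v_n-y}{X}=\dual{\xi_n,\iota v_n-\iota y}{U}\longrightarrow \dual{\xi,\iota v-\iota y}{U}=\dual{\iota^*\xi,v-y}{X},
\end{align*}
because $\iota v_n-\iota y\to \iota v-\iota y$ strongly in $U$ and $\xi_n\rightharpoonup \xi$ in $U^*$. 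This yields not merely the required $\liminf$ inequality but equality in the limit. A standard subsequence-uniqueness argument upgrades the convergence along a subsequence to convergence of the full sequence.

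The only nontrivial point is establishing the strong-to-weak closed graph property of $\partial J$, which is where the nonconvexity of $J$ prevents a purely monotone argument; however, it is handled by passing to the limit in the inequality $\dual{\xi_n,w}{U}\leq J^0(\iota v_n;w)$ and invoking upper semicontinuity of $(u,w)\mapsto J^0(u;w)$. Everything else is bookkeeping with continuity of $\iota$, $\iota^*$, reflexivity, and the growth estimate.
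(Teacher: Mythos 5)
The paper does not prove this proposition at all: it is quoted verbatim as Proposition 5.6 of \cite{HMSBOOK}, so there is no in-paper argument to compare against. Your proof is correct and is exactly the standard argument one finds in that reference: weak compactness and convexity of $\partial J(\iota v)$ plus weak-to-weak continuity of $\iota^*$ for condition 1), Eberlein--\v{S}mulian together with the strong$\times$weak closed-graph property of $\partial J$ (from upper semicontinuity of $(u,w)\mapsto J^0(u;w)$) for condition 2), and the compactness of $\iota$ turning weak convergence of $v_n$ into strong convergence of $\iota v_n$ for condition 3). The only phrase worth tightening is the final ``subsequence-uniqueness'' remark in condition 3): the weak cluster points $\xi$ of $\{\xi_n\}$ need not be unique, so one should instead pass to a subsequence realizing $\liminf_n\dual{v_n^*,v_n-y}{X}$, extract a further weakly convergent subsequence of $\{\xi_n\}$, and take $u(y)=\iota^*\xi$ for that limit; this gives the required inequality without any uniqueness claim.
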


\begin{proposition}\label{Prop_sum_pseudo}
	Assume that  $X$ is a reflexive Banach space and $A_1, A_2\colon X\to
	2^{X^*}$ are pseudomonotone operators. Then the operator $A_1+A_2\colon X\to 2^{X^*}$
	is pseudomonotone.
\end{proposition}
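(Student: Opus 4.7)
The plan is to verify in turn the three items in Definition \ref{Def_pseudomonotone_multi} for the operator $A := A_1 + A_2$, with essentially all of the work concentrated in item~3).

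For item 1), I would note that $A(v)$ is nonempty and bounded because each $A_i(v)$ is, and convexity of a Minkowski sum of convex sets is immediate. Closedness requires a little care, since a sum of two norm-closed convex sets in a Banach space need not be norm-closed; however, in the dual of the reflexive space $X$ each $A_i(v)$ is bounded, closed and convex, hence weakly compact, and the addition map is weak-weak continuous, so $A(v)$ is the continuous image of the weakly compact set $A_1(v)\times A_2(v)$, hence weakly compact and therefore norm-closed. For item 2), upper semicontinuity is preserved under addition: any weak neighbourhood of $\xi_1+\xi_2$ contains a sum of weak neighbourhoods, so the usc property on any finite dimensional subspace passes from $A_1,A_2$ to $A_1+A_2$.

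The heart of the proof is item 3). Given a sequence with $v_n\to v$ weakly in $X$, $v_n^*\in A(v_n)$ and $\limsup_{n\to\infty}\dual{v_n^*,v_n-v}{X}\le 0$, I would fix a decomposition $v_n^*=w_n^1+w_n^2$ with $w_n^i\in A_i(v_n)$. The central claim to establish is that
\begin{align*}
\limsup_{n\to\infty}\dual{w_n^i,v_n-v}{X}\le 0 \qquad \text{for each } i=1,2 \text{ \emph{individually}.}
\end{align*}
To prove this, suppose for contradiction that $\limsup_{n\to\infty}\dual{w_n^1,v_n-v}{X}=\beta>0$ and pass to a subsequence (not relabelled) along which this limsup is attained as a limit. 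Then
\begin{align*}
\limsup_{n\to\infty}\dual{w_n^2,v_n-v}{X}\le\limsup_{n\to\infty}\dual{v_n^*,v_n-v}{X}-\lim_{n\to\infty}\dual{w_n^1,v_n-v}{X}\le -\beta<0,
\end{align*}
and the pseudomonotonicity of $A_2$ applied with test vector $y=v$ produces $u\in A_2(v)$ satisfying $0=\dual{u,v-v}{X}\le\liminf_{n\to\infty}\dual{w_n^2,v_n-v}{X}\le -\beta$, a contradiction. The symmetric argument handles $w_n^2$.

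Once this splitting is in hand, for every $y\in X$ the pseudomonotonicity of each $A_i$ supplies $u_i(y)\in A_i(v)$ with $\dual{u_i(y),v-y}{X}\le\liminf_{n\to\infty}\dual{w_n^i,v_n-y}{X}$. Setting $u(y):=u_1(y)+u_2(y)\in A(v)$ and using $\liminf a_n+\liminf b_n\le\liminf(a_n+b_n)$ yields $\dual{u(y),v-y}{X}\le\liminf_{n\to\infty}\dual{v_n^*,v_n-y}{X}$, which is exactly the required condition. I expect the splitting claim in item~3) to be the main obstacle; the rest of the argument is essentially bookkeeping.
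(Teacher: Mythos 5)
Your argument is correct, and it coincides with the standard proof of this fact: the paper itself does not prove the proposition but cites Proposition 1.3.68 of \cite{DMP2}, and the argument given there rests on exactly the splitting claim you identify, namely that $\limsup_{n\to\infty}\dual{w_n^i,v_n-v}{X}\le 0$ must hold for each summand individually, proved by the same subsequence-and-contradiction device using pseudomonotonicity of the other operator at $y=v$. The remaining points (weak compactness of the values in the reflexive dual to get closedness of the Minkowski sum, and the compactness-based refinement needed to push upper semicontinuity through the sum) are standard, and your sketch of them is adequate.
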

    
\indent In what follows we introduce the notion of coercivity.
\begin{definition}
	Let $X$ be a real Banach space and $A\colon X\to 2^{X^*}$ be an operator. We say that $A$ is coercive if either $D(A)$ is bounded or $D(A)$ is unbounded and
	$$
	\lim_{\|v\|_X\to\infty\,\,v\in D(A)}\frac{\inf\{\dual{v^*,v}{X}\,|\,\,v^*\in A v\}}{\|v\|_X}=+\infty,
	$$
	where, recall, $D(A)=\{x\in X|\,\,A(x)\neq\emptyset\}$ is the domain of $A$.
\end{definition}
The following is the main surjectivity result for multivalued  pseudomonotone and coercive operators.

\begin{proposition}\label{prop:Bartosz5}
	Let $X$ be a real, reflexive Banach space and $A\colon X\to 2^{X^*}$ be pseudomonotone and coercive. Then $A$ is surjective, i.e., for all $b\in X^*$ there exists $v\in X$ such that $Av\ni b$.
\end{proposition}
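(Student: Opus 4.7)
My plan is to combine a Galerkin-type finite-dimensional approximation with the liminf characterisation of multivalued pseudomonotonicity supplied by Definition \ref{Def_pseudomonotone_multi}. Fix an increasing sequence of finite-dimensional subspaces $X_n\subset X$ whose union is dense in $X$ and let $i_n\colon X_n\hookrightarrow X$ be the inclusion. Consider the reduced multifunction $A_n = i_n^{*}\circ A\circ i_n\colon X_n\to 2^{X_n^{*}}$: since $A$ has nonempty, closed, bounded, convex values and is upper semicontinuous from finite-dimensional subspaces into $X^{*}$ equipped with the weak topology, $A_n$ inherits the analogous properties on $X_n$ (where the weak and norm topologies of $X_n^{*}$ coincide), and coercivity of $A$ transfers directly to $A_n$ since the $X$- and $X_n$-norms agree on $X_n$.

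A standard finite-dimensional surjectivity result for coercive upper semicontinuous multifunctions with nonempty convex compact values (a Kakutani--Ky Fan or Brouwer-degree argument) then produces $v_n\in X_n$ and $\xi_n\in A(v_n)$ satisfying the Galerkin equality $\dual{\xi_n-b,\,w}{X}=0$ for all $w\in X_n$. Testing with $w=v_n$ yields $\dual{\xi_n,v_n}{X}=\dual{b,v_n}{X}\le\|b\|_{X^{*}}\|v_n\|_X$, and coercivity of $A$ then prevents $\|v_n\|_X$ from blowing up, so $\{v_n\}$ is bounded in $X$. Using that $A$ maps bounded sets into bounded subsets of $X^{*}$ (a standard consequence of the pseudomonotonicity axioms, proved by contradiction) one also controls $\{\xi_n\}$, so by reflexivity one extracts a subsequence with $v_n\rightharpoonup v$ in $X$ and $\xi_n\rightharpoonup\xi$ in $X^{*}$. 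Fixing $w\in X_m$ and passing to the limit in the Galerkin equality for $n\ge m$ gives $\dual{\xi,w}{X}=\dual{b,w}{X}$, and density of $\bigcup_m X_m$ in $X$ identifies $\xi=b$.

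The next step is to apply pseudomonotonicity to produce the inclusion $b\in A(v)$. From the Galerkin equality with $w=v_n$ we obtain $\dual{\xi_n,v_n}{X}=\dual{b,v_n}{X}\to\dual{b,v}{X}=\dual{\xi,v}{X}$, so $\dual{\xi_n,v_n-v}{X}\to 0$ and in particular the limsup hypothesis in Definition \ref{Def_pseudomonotone_multi} is satisfied. Pseudomonotonicity then provides, for every $y\in X$, an element $u(y)\in A(v)$ with
\[
\dual{u(y),v-y}{X}\le \liminf_{n\to\infty}\dual{\xi_n,v_n-y}{X}=\dual{b,v-y}{X}.
\]
I would finish by a Hahn--Banach separation argument: if $b\notin A(v)$, then convexity and closedness of $A(v)$ yield $z\in X$ and $\alpha\in\RR$ with $\dual{u,z}{X}\le\alpha<\dual{b,z}{X}$ for every $u\in A(v)$; choosing $y=v+z$ in the displayed inequality produces $\dual{u(y),z}{X}\ge\dual{b,z}{X}$, contradicting the separation.

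The step I expect to be the main obstacle is the finite-dimensional existence for $A_n$: one must verify that restricting to $X_n$ and composing with $i_n^{*}$ preserves not only upper semicontinuity with convex values but also compactness of the images (needed for a Kakutani/Brouwer-degree argument in $X_n$), and that the coercivity inequality on $X$ survives the reduction to $X_n$ with the induced norm. Once the Galerkin sequence and the weak limits $v_n\rightharpoonup v$, $\xi_n\rightharpoonup b$ are in hand, the remaining identification of $b$ as an element of $A(v)$ via Definition \ref{Def_pseudomonotone_multi} and Hahn--Banach is essentially bookkeeping.
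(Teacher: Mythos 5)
The paper does not actually prove this proposition: it is recalled as a classical surjectivity theorem of Browder--Hess type (cf.\ Theorem 1.3.70 in \cite{DMP2}), so there is no internal proof to compare against. Your Galerkin architecture is indeed the standard route, and your endgame --- verifying the $\limsup$ hypothesis of Definition \ref{Def_pseudomonotone_multi}, invoking condition 3), and finishing with a Hahn--Banach separation from the closed convex set $A(v)$ --- is correct as far as it goes.

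There is, however, one genuine gap: the boundedness of the multipliers $\{\xi_n\}$. You assert that a pseudomonotone operator ``maps bounded sets into bounded subsets of $X^*$'' as a standard consequence of the axioms; with the definition used here this is false. The contradiction argument you allude to yields only \emph{local} boundedness (uniform bounds along strongly convergent sequences), which does not apply because your Galerkin sequence converges only weakly. For a concrete obstruction, take the subdifferential of a continuous convex function on $\ell^2$ that is unbounded on the unit ball (such functions exist): it is everywhere defined and maximal monotone, hence pseudomonotone in the sense of Definition \ref{Def_pseudomonotone_multi}, yet not bounded on bounded sets. Without a bound on $\{\xi_n\}$ you can neither extract $\xi_n\rightharpoonup\xi$ nor pass from $\dual{\xi_n,w}{X}=\dual{b,w}{X}$ on $\bigcup_m X_m$ to all of $X$ by density, and the computation of $\lim\dual{\xi_n,v_n-v}{X}$ collapses. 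The standard repair is to run the scheme over the \emph{directed family of all finite-dimensional subspaces} ordered by inclusion (which also removes your implicit separability assumption on $X$): for every fixed $y\in X$ one eventually has $y\in F$, so $\dual{w_F,y}{X}=\dual{b,y}{X}$ holds exactly along the net, and both $\limsup\dual{w_F,u_F-u}{X}\le 0$ and $\liminf\dual{w_F,u_F-y}{X}=\dual{b,u-y}{X}$ follow without ever bounding $w_F$. Alternatively, add the hypothesis that $A$ is a bounded map --- many texts build this into the definition of pseudomonotonicity, and it holds for the operators arising in this paper.
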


We also recall Lemma 3.4.12 of \cite{DMP2}, known as Ehrling's Lemma.

\begin{lemma}\label{Lemma_Ehrling}
	If $V_0,\, V,\, V_1$ are Banach spaces such that $V_0\subset V\subset V_1$, the embedding of $V_0$ into $V$ is compact and the embedding of $V$ into $V_1$ is continuous, then for every $\varepsilon>0$, there exists a constant $c(\varepsilon)>0$ such that
	\begin{eqnarray}
	\|v\|_V\leq\varepsilon\|v\|_{V_0}+c(\varepsilon)\|v\|_{V_1}\quad \text{for all}\,\,\,v\in V_0.
	\end{eqnarray}
\end{lemma}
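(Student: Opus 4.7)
The plan is to argue by contradiction. Suppose the conclusion fails. Then there exists some $\varepsilon_0 > 0$ such that for every $n \in \mathbb{N}$ one can find a vector $v_n \in V_0$ with
\begin{align*}
\|v_n\|_V > \varepsilon_0 \|v_n\|_{V_0} + n \|v_n\|_{V_1}.
\end{align*}
Without loss of generality I would normalize, setting $w_n = v_n / \|v_n\|_V$, so $\|w_n\|_V = 1$ and the inequality becomes
\begin{align*}
1 > \varepsilon_0 \|w_n\|_{V_0} + n \|w_n\|_{V_1}.
\end{align*}

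From this, I would extract two consequences. First, $\|w_n\|_{V_0} < 1/\varepsilon_0$, so $\{w_n\}$ is bounded in $V_0$. Since $V_0$ embeds compactly into $V$, there exists a subsequence (not relabeled) and an element $w \in V$ such that $w_n \to w$ strongly in $V$; in particular $\|w\|_V = 1$. Second, $\|w_n\|_{V_1} < 1/n$, so $w_n \to 0$ strongly in $V_1$.

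The contradiction then comes from identifying the two limits. Using that $V$ embeds continuously into $V_1$, the strong convergence $w_n \to w$ in $V$ implies $w_n \to w$ in $V_1$, so by uniqueness of limits in $V_1$ we must have $w = 0$. This contradicts $\|w\|_V = 1$. The only subtle step is ensuring the extracted subsequence is consistent across both norms, which is immediate because a single subsequence is being considered and all convergences take place in the ambient space $V_1$. No particular step is a real obstacle here; the proof is essentially a compactness-plus-uniqueness argument.
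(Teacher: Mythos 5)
Your argument is correct and complete: the contradiction via normalization, boundedness in $V_0$, compactness of $V_0\hookrightarrow V$, decay in $V_1$, and identification of the limit through the continuous embedding $V\hookrightarrow V_1$ is exactly the standard proof of Ehrling's lemma. The paper itself gives no proof — it only cites Lemma 3.4.12 of \cite{DMP2} — and your argument matches the one found there; the only point worth making explicit is that $v_n\neq 0$ (the strict inequality forces $\|v_n\|_V>0$), so the normalization is legitimate.
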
 

\indent Let $X$ be a Banach space and $T>0$.
We introduce the space $BV(0,T;X)$ of functions of
bounded total variation on $[0,T]$. Let $\pi$ denotes any finite
partition of $[0,T]$ by a family of disjoint subintervals $\{\sigma_i
= (a_i,b_i)\}$ such that $[0,T] = \cup_{i=1}^n \bar{\sigma}_i$.
Let $\mathcal{F}$ denote the family of all such partitions. Then,
for a function $x\colon [0,T]\to X$ and for $1 \leq q < \infty$,
we define a seminorm 
\begin{equation}\nonumber
\| x \|^q_{BV^q (0,T;X)} = \sup_{\pi \in \mathcal{F}} \left\{
\sum_{\sigma_i \in \pi} \|x(b_i)-x(a_i)\|_X^q \right\},
\end{equation}
and the space
$$
BV^q(0,T;X)=\{x\colon [0,T]\to X |\,\,\| x \|_{BV^q (0,T;X)}<\infty\}.
$$
For $1 \leq p \leq \infty$, $1\leq q < \infty$ and Banach spaces $X$, $Z$ such that $X
\subset Z$, we introduce a vector space
$$ M^{p,q}(0,T;X,Z) = L^p(0,T;X) \cap BV^q (0,T;Z).$$
Then $M^{p,q}(0,T; X,Z)$ is also a Banach space with the norm given by $\| \cdot \|_{L^p(0,T;X)} + \| \cdot \|_{BV^q(0,T;Z)}.$\\

The following proposition will play the crucial role for the convergence of the Rothe functions which will be constructed later. 
For its proof, we refer to \cite{Kalita2013}. 

\begin{proposition}\label{prop:Bartosz6}
	Let $1\leq p,q<\infty$. Let $X_1\subset X_2\subset X_3$ be real Banach spaces such that $X_1$ is reflexive, the embedding $X_1\subset X_2$ is compact and the embedding $X_2\subset X_3$ is continuous. Then the embedding $M^{p,q}(0,T;X_1;X_3)\subset L^p(0,T;X_2)$ is compact. 
\end{proposition}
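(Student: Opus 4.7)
Given a bounded sequence $\{u_n\}\subset M^{p,q}(0,T;X_1,X_3)$, the plan is to extract a subsequence that is Cauchy in $L^p(0,T;X_2)$, via a Simon-type compactness argument combining the compact embedding $X_1\subset X_2$ with a uniform time-translation estimate in the weaker space $X_3$ coming from the $BV^q$ bound.

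The first step is to reduce compactness in $L^p(0,T;X_2)$ to a time-translation bound in $L^p(0,T;X_3)$. By Ehrling's Lemma \ref{Lemma_Ehrling}, for every $\varepsilon>0$ there exists $c(\varepsilon)>0$ with $\|v\|_{X_2}\leq\varepsilon\|v\|_{X_1}+c(\varepsilon)\|v\|_{X_3}$ for all $v\in X_1$. Applied pointwise to $u_n(t+h)-u_n(t)$, raised to the power $p$, and integrated over $(0,T-h)$, this gives
\begin{equation*}
\int_0^{T-h}\|u_n(t+h)-u_n(t)\|_{X_2}^p\,dt\leq C_p\varepsilon^p\|u_n\|_{L^p(0,T;X_1)}^p+C_pc(\varepsilon)^p\int_0^{T-h}\|u_n(t+h)-u_n(t)\|_{X_3}^p\,dt.
\end{equation*}
Choosing $\varepsilon$ small absorbs the first term uniformly in $n$, so it remains to show the second integral tends to $0$ as $h\to 0^+$, uniformly in $n$.

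The second step extracts this translation bound from the $BV^q$ control. For each $s\in[0,h]$, the partition $\{s+kh\}_{k\geq 0}$ of a subinterval of $[0,T]$ yields $\sum_k\|u_n(s+(k+1)h)-u_n(s+kh)\|_{X_3}^q\leq\|u_n\|_{BV^q(0,T;X_3)}^q$. Integrating in $s\in[0,h]$ and using the substitution $t=s+kh$ via Fubini produces
\begin{equation*}
\int_0^{T-h}\|u_n(t+h)-u_n(t)\|_{X_3}^q\,dt\leq h\,\|u_n\|_{BV^q(0,T;X_3)}^q\leq Ch.
\end{equation*}
Moreover, the trivial two-point partition $\{s,t\}$ gives $\|u_n(t)-u_n(s)\|_{X_3}\leq\|u_n\|_{BV^q}$; choosing $s_n\in[0,T]$ by Chebyshev so that $\|u_n(s_n)\|_{X_1}\leq T^{-1/p}\|u_n\|_{L^p(0,T;X_1)}$ and using $X_1\hookrightarrow X_3$ yields a uniform bound on $\|u_n\|_{L^\infty(0,T;X_3)}$. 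Interpolating between this $L^\infty$ bound and the $L^q$ translation estimate in the case $p>q$, or applying Hölder's inequality directly when $p\leq q$, converts the $O(h)$ bound in $L^q$ into an $L^p$ translation estimate that still vanishes as $h\to 0^+$. Together with boundedness in $L^p(0,T;X_1)$ and the compactness of $X_1\subset X_2$, Simon's compactness criterion then provides a strongly convergent subsequence in $L^p(0,T;X_2)$.

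The main obstacle is calibrating the $q$-variation control with the $p$-integrability required: the cases $p\leq q$ and $p>q$ have to be handled separately, and in the harder case $p>q$ it is the extraction of a uniform $L^\infty(0,T;X_3)$ bound from $BV^q$ together with the $L^p(0,T;X_1)$ bound that supplies the missing interpolation tool. Once this interpolation is in place, the remaining verification of Simon's hypotheses is routine.
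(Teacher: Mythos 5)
Your argument is correct. The paper itself gives no proof of this proposition --- it simply refers the reader to \cite{Kalita2013} --- and your Simon-type argument (the $O(h)$ translation estimate in $L^q(0,T-h;X_3)$ extracted from the $BV^q$ bound via the shifted partitions, Ehrling's lemma to pass from $X_3$ to $X_2$, and the uniform $L^\infty(0,T;X_3)$ bound to handle the case $p>q$) is essentially the proof given in that reference.
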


The following version of Aubin-Celina  convergence theorem (see \cite{Aubin}) will be used in what follows.
\begin{proposition}\label{prop:Bartosz7}
	Let $X$ and $Y$ be Banach spaces, and $F\colon X\to 2^Y$ be a multifunction such that\\[-.6cm]
	\begin{itemize}
		\item[(a)] the values of $F$ are nonempty, closed and convex subsets of $Y$,\\[-.6cm]
		\item[(b)] $F$ is upper semicontinuous from $X$ into $w-Y$.
	\end{itemize}
	Let $x_n\colon(0,T)\to X$, $y_n\colon(0,T)\to Y$, $n\in \mathbb{N}$, be measurable functions such that $x_n$ converges almost everywhere on $(0,T)$ to a function $x\colon(0,T)\to X$ and $y_n$ converges weakly in $L^1(0,T;Y)$ to $y:(0,T)\to Y$. If $y_n(t)\in F(x_n(t))$ for all $n\in\mathbb{N}$ and almost all $t\in (0,T)$, then $y(t)\in F(x(t))$ for a.e. $t\in (0,T)$.
\end{proposition}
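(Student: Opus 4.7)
The plan is to combine Mazur's lemma (to upgrade the weak $L^1$ convergence of $\{y_n\}$ to strong pointwise convergence of a sequence of convex combinations) with the weak-topology upper semicontinuity of $F$, closing via a Hahn--Banach separation argument. Since $y_n \rightharpoonup y$ in $L^1(0,T;Y)$, Mazur's lemma provides convex combinations
\[
z_n = \sum_{k=n}^{N(n)} \lambda_{n,k}\, y_k, \qquad \lambda_{n,k}\ge 0, \ \sum_{k=n}^{N(n)} \lambda_{n,k}=1,
\]
with $z_n \to y$ strongly in $L^1(0,T;Y)$. Passing to a subsequence (not relabeled), $z_n(t)\to y(t)$ strongly in $Y$ for a.e.\ $t\in(0,T)$. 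After deleting a null set, I fix $t$ at which simultaneously $x_n(t)\to x(t)$ in $X$, $z_n(t)\to y(t)$ in $Y$, and $y_n(t)\in F(x_n(t))$ for every $n$.

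Next I argue by contradiction: suppose $y(t)\notin F(x(t))$. Closedness and convexity of $F(x(t))$ imply weak closedness, so Hahn--Banach yields $\ell\in Y^*$ and $\varepsilon>0$ with $\ell(y(t))>\alpha+2\varepsilon$, where $\alpha:=\sup_{v\in F(x(t))}\ell(v)$. The weakly open half-space $V=\{v\in Y:\ell(v)<\alpha+\varepsilon\}$ contains $F(x(t))$. By upper semicontinuity of $F$ from $X$ into $w$-$Y$, there is an $X$-neighborhood $U$ of $x(t)$ with $F(U)\subset V$. For $n$ large $x_n(t)\in U$, hence $\ell(y_n(t))<\alpha+\varepsilon$, and consequently
\[
\ell(z_n(t))=\sum_{k=n}^{N(n)}\lambda_{n,k}\,\ell(y_k(t))<\alpha+\varepsilon.
\]
Passing to the limit using continuity of $\ell$ and $z_n(t)\to y(t)$ gives $\ell(y(t))\le \alpha+\varepsilon$, contradicting the choice of $\varepsilon$. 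Hence $y(t)\in F(x(t))$ for a.e.\ $t$.

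The main obstacle is the interplay between the two topologies: upper semicontinuity is expressed in the weak topology of $Y$, while the pointwise identity requires honest strong convergence. Mazur's lemma and Hahn--Banach jointly bridge this gap by converting weak neighborhoods into scalar inequalities that are preserved under convex combinations and strong limits. A small but essential bookkeeping point is that the Mazur combinations must be taken over indices $k\ge n$, so that eventually every summand $y_k(t)$ satisfies the $\ell$-bound and the displayed inequality for $\ell(z_n(t))$ is valid for all sufficiently large $n$.
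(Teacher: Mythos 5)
Your argument is correct and complete. Note, however, that the paper does not prove this proposition at all: it is quoted as a known version of the Aubin--Cellina convergence theorem with a citation to Aubin--Frankowska, so there is no in-paper proof to compare against. What you have written is essentially the standard proof of that classical result, and all the delicate points are handled properly: Mazur's lemma applied to the tails $\{y_k : k\ge n\}$ (so that the scalar bound $\ell(y_k(t))<\alpha+\varepsilon$, valid only for large $k$, transfers to the convex combinations $z_n(t)$), extraction of a further subsequence to convert strong $L^1$ convergence into a.e.\ convergence, restriction to a common full-measure set of $t$ (the countable union over $n$ of the exceptional null sets in the hypothesis $y_n(t)\in F(x_n(t))$ is still null), and strict separation of the point $y(t)$ from the nonempty, closed, convex --- hence weakly closed --- set $F(x(t))$, which guarantees $\alpha=\sup_{v\in F(x(t))}\ell(v)<\ell(y(t))<\infty$. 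The only stylistic remark is that the contradiction is not strictly necessary: the same separation argument shows directly that $\ell(y(t))\le\sup_{v\in F(x(t))}\ell(v)$ for every $\ell\in Y^*$, which already places $y(t)$ in the closed convex set $F(x(t))$; but this is a matter of presentation, not of substance.
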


We conclude this section with a well known Young's inequality
\begin{equation}\label{Young}
ab\leq\varepsilon a^2+\frac{1}{4\varepsilon}b^2,
\end{equation}
for all $a,b\in\real$, $\varepsilon>0$.

\section{Problem formulation}\label{Sec_3}
In this section we formulate an abstract evolutionary inclusion of parabolic type involving Clarke subdifferential. We also impose assumptions on the data of the problem.\\
\indent Let $V$ be a real, reflexive, separable Banach space and $H$ be a real, separable Hilbert space equipped with  the inner product $(\cdot,\cdot)_H$ and the corresponding norm given by  $\|v\|_H=\sqrt{(v,v)_H}$ for all $v\in H$. For simplicity of notation we will write  $|v|=\|v\|_H$, $(u,v)=(u,v)_H$ for all $u,v\in H$ and $\|v\|=\|v\|_V$, $\skalar{l,v}=\dual{l,v}{V}$ for all $v\in V$, $l\in V^*$.  Identifying $H$ with its dual, we assume, that the spaces $V,\, H$ and $V^*$ form an evolution triple, i.e., $V\subset H\subset V^*$ with all embeddings being dense and continuous. Moreover, we assume, that the embedding $V\subset H$ is compact. Let $i\colon V\to H$ be an embedding operator (for $v\in V$ we still denote $iv\in H$ by $v$). For all $u\in H$ and $v\in V$, we have $\skalar{u,v}=(u,v)$. We also introduce a reflexive Banach space $U$ and the operator $\iota\in {\cal L}(V,U)$. For $T>0$, we denote by $[0,T]$ a time interval and introduce the following spaces of time dependent functions: ${\cal V}=L^2(0,T;V)$, ${\cal V^*}=L^2(0,T;V^*)$, ${\cal H}=L^2(0,T;H)$, ${\cal U}=L^2(0,T;U)$, ${\cal U^*}=L^2(0,T;U^*)$, equipped with their classical $L^2$ norms. We use notation $\dual{u,v}{\cal V}=\int_0^T\skalar{u(t), v(t)}dt$, for all $u,v\in {\cal V}$, $(u,v)_{\cal H}=\int_0^T(u(t), v(t))dt$ for all $u,v\in {\cal H}$ and $\dual{u,v}{\cal U}=\int_0^T\dual{u(t), v(t)}{U}dt$ for all $u,v\in {\cal U}$. Finally we define the space ${\cal W}=\{v\in {\cal V}\,\, |\,\,v'\in {\cal V^*}\}$. Hereafter, $v'$ denotes the time derivative of $v$ in the sense of distribution. \\
\indent We consider the operator $A\colon V\to V^*$ and the functions $f\colon [0,T]\to V^*$, $J\colon U\to \real$. Using the above notation, we formulate the following problem.

\vskip3mm \noindent {\bf Problem ${\cal P}$}. {\it Find $u\in\cV$ such that $u(0)=u_0$, and
	\begin{align}
	& u'(t)+Au(t)+\iota^*\xi(t)=f(t)\quad \text{\it in}\,\,\,V^* \ \ \text{for a.e.} \ t\in (0,T)\label{3.1}
	\end{align}
	\vspace{-1mm} with
	\begin{equation}
	\xi(t)\in \partial J(\iota u(t))\quad \text{\it for\ a.e.}\
	\,t\in (0,T).\label{3.2}\end{equation} }

\noindent Now we impose assumptions on the data of Problem  $\cP$. \\

\noindent {$H(A)$} The operator $A\colon V\to V^*$ satisfies
\begin{itemize}
	\item[(i)] $A$ is pseudomonotone,
	\item[(ii)] $\|Av\|_{V^*}\leq a+b\|v\|$ for all $v\in V$, with $a\geq 0$, $b>0$,
	\item[(iii)] $\skalar{Av,v}\geq \alpha\|v\|^2-\beta|v|^2$
	for a.e. $t\in
	(0,T)$, for all $v\in V$ with $\alpha>0$, $\beta\geq 0$.
\end{itemize}

\medskip
\noindent {$H(J)$} The functional $J\colon U\to \real$ is
such that
\begin{itemize}
\item [(i)] $J$ is locally  Lipschitz\vspace{-1mm},
\item [(ii)] $\partial
J$ satisfies the following growth condition\vspace{-1mm}
\begin{align*}\| \xi \|_{U^{\ast}} \leq d(1+\|u\|_U), 
\end{align*}
for all $u\in U,\, \xi\in \partial J(u)$ with $d>0$.
\end{itemize}

\noindent {$H(\iota)$} The operator $\iota\colon V
\rightarrow U$ is linear, continuous and compact. Moreover,
there exists a Banach space $Z$ such that $V\subset Z\subset H$, where the embedding $V\subset Z$ is compact, the embedding $Z\subset H$ is continuous, and the operator $\iota$ can be decomposed as $\iota=\iota_2\circ\iota_1 $, where $\iota_1\colon V\to Z$ denotes the (compact) identity mapping and  $\iota_2\in{\cal L}(Z,U)$.  
\\


\noindent {$H(f)$} $f \in L^2(0,T;V^*)$.\\

\noindent {$H(0)$}  $u_0 \in H$. 


\medskip
\medskip

In the rest of the paper we always assume that assumptions $H(A)$, $H(J)$, $H(\iota)$, $H(f)$ and $H(0)$ hold.

\begin{corollary}\label{Cor_3.1}
	For every $\varepsilon>0$ there exists $c_{\iota}(\varepsilon)>0$ such that
	\begin{align}
\nonumber	\|\iota u\|_U\leq\varepsilon\|u\|_V+c_{\iota}(\varepsilon)|u|\quad\text{for all}\,\,\,u\in V.
	\end{align}
\end{corollary}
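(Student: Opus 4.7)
The plan is to apply Ehrling's Lemma (Lemma~\ref{Lemma_Ehrling}) to the triple $V\subset Z\subset H$ furnished by assumption $H(\iota)$ and then transport the resulting inequality through the continuous embedding $\iota_2\colon Z\to U$. Since $H(\iota)$ guarantees that $V\subset Z$ is compact and $Z\subset H$ is continuous, Ehrling's Lemma applies directly: for every $\delta>0$ there exists $c(\delta)>0$ such that
\begin{equation*}
\|u\|_Z\leq\delta\|u\|_V+c(\delta)\|u\|_H\qquad\text{for all }u\in V.
\end{equation*}

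Next I would use the decomposition $\iota=\iota_2\circ\iota_1$ together with $\iota_2\in\mathcal{L}(Z,U)$ to obtain $\|\iota u\|_U=\|\iota_2(\iota_1 u)\|_U\leq\|\iota_2\|\,\|u\|_Z$ (where $\|u\|_Z$ makes sense by the embedding $V\subset Z$). Combining the two estimates yields
\begin{equation*}
\|\iota u\|_U\leq\|\iota_2\|\,\delta\,\|u\|_V+\|\iota_2\|\,c(\delta)\,|u|.
\end{equation*}
Given an arbitrary $\varepsilon>0$, it then suffices to choose $\delta=\varepsilon/\|\iota_2\|$ (if $\|\iota_2\|=0$ the inequality is trivial) and set $c_{\iota}(\varepsilon):=\|\iota_2\|\,c\!\left(\varepsilon/\|\iota_2\|\right)$ to arrive at the desired bound.

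There is really no substantive obstacle here: the result is essentially a direct corollary of Ehrling's Lemma applied to the intermediate space $Z$ supplied by $H(\iota)$, the only mild subtlety being to funnel the $Z$-estimate through the bounded operator $\iota_2$ rather than trying to apply Ehrling's Lemma directly to $U$ (which a priori need not contain $H$ or admit the compact/continuous chain of embeddings required). Once one recognizes that the role of the auxiliary space $Z$ in the assumption is precisely to enable Ehrling's Lemma while the operator $\iota_2$ carries the estimate into $U$, the argument is a short two-line computation.
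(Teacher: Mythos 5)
Your proof is correct and follows essentially the same route as the paper: decompose $\iota=\iota_2\circ\iota_1$, bound $\|\iota u\|_U\leq\|\iota_2\|\,\|\iota_1 u\|_Z$, and apply Ehrling's Lemma to the chain $V\subset Z\subset H$. The only difference is that you make the rescaling $\delta=\varepsilon/\|\iota_2\|$ explicit, which the paper leaves implicit.
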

\begin{proof}	
	It follows from $H(\iota)$ that $\|\iota u\|_U=\|\iota_2\circ \iota_1 u\|_U\leq \|\iota_2\|\|\iota_1 u\|_Z$. On the other hand, from Lemma \ref{Lemma_Ehrling}, we have $\|\iota_1 u\|_Z\leq \varepsilon\|u\|+c(\varepsilon)|u|$, which yields the assertion.
\end{proof}
\medskip 

The direct consequence of Corollary \ref{Cor_3.1} is following.
\begin{corollary}\label{Cor_3.2}
	For every $\varepsilon>0$ we have
	\begin{align}\label{3.4}
	\|\iota u\|^2_U\leq\varepsilon\|u\|^2+\bar{c}_\iota(\varepsilon)|u|^2,
	\end{align}
	where $\bar{c}_\iota(\varepsilon)=2c_\iota^2(\sqrt{\varepsilon/2})$.
\end{corollary}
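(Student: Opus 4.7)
The argument is an immediate reduction from Corollary~\ref{Cor_3.1}: the plan is simply to square the linear bound there after rescaling its free parameter. Given $\varepsilon>0$, I would apply Corollary~\ref{Cor_3.1} with its parameter chosen as $\delta=\sqrt{\varepsilon/2}$, obtaining
$$\|\iota u\|_U\leq \sqrt{\varepsilon/2}\,\|u\|+c_\iota\!\bigl(\sqrt{\varepsilon/2}\bigr)|u|\quad\text{for all }u\in V.$$
Then I would square both sides and use the elementary inequality $(a+b)^2\leq 2a^2+2b^2$, which follows from Young's inequality~\eqref{Young} applied to the cross term $2ab$ with the choice $\varepsilon=\tfrac12$. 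This yields
$$\|\iota u\|_U^2\leq 2\cdot\tfrac{\varepsilon}{2}\|u\|^2+2c_\iota^2\!\bigl(\sqrt{\varepsilon/2}\bigr)|u|^2=\varepsilon\|u\|^2+\bar c_\iota(\varepsilon)|u|^2,$$
which is precisely~\eqref{3.4} with $\bar c_\iota(\varepsilon)=2c_\iota^2(\sqrt{\varepsilon/2})$.

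There is no genuine obstacle here; the statement is a routine reformulation of the additive interpolation bound into a squared, multiplicative form that will be convenient later for absorbing $\|\iota u\|_U^2$ into coercivity/energy estimates. The only bookkeeping point is that the specific choice $\delta=\sqrt{\varepsilon/2}$ is forced by the requirement that, after squaring and paying the factor $2$ from the elementary inequality, the coefficient of $\|u\|^2$ comes out to exactly $\varepsilon$; this choice in turn fixes the explicit expression for $\bar c_\iota(\varepsilon)$ stated in the corollary.
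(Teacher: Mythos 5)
Your proposal is correct and is precisely the computation the paper leaves implicit (the corollary is stated as a ``direct consequence'' of Corollary~\ref{Cor_3.1} without a written proof): the choice $\delta=\sqrt{\varepsilon/2}$ followed by $(a+b)^2\leq 2a^2+2b^2$ reproduces exactly the stated constant $\bar c_\iota(\varepsilon)=2c_\iota^2(\sqrt{\varepsilon/2})$. No issues.
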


\section{The Rothe problem}\label{Sec_4}

In this section we consider a semidiscrete approximation of Problem $\cP$ known as Rothe problem. Our goal is to study a solvability of the Rothe problem, to obtain a-priori estimates for its solution and to study the convergence of semidiscrete solution to the solution of the original problem as the discretization parameter converges to zero.\\
We start with a uniform division of the time interval. Let $N\in \nat$ be fixed and $\tau=T/N$ be a time step. In the rest of the paper we denote by $c$ a generic positive constant independent on discretization parameters, that can differ from line to line. 
Let $u_\tau^0\in V$ be given and assume that
\begin{equation}\label{0} 
u_\tau^0\to u_0 \ \mbox{strongly in} \ H
`\end{equation}
and
\begin{equation}\label{-1}
\|u_\tau^0\|\leq\frac{c}{\sqrt{\tau}}.
\end{equation}
We define the sequence $\{\fntau\}_{n=0}^N$ by the formula  
  
\begin{align*}
&f^1_\tau:=\frac{1}{\tau}\int_0^\tau f(t)dt,\\
&\fntau:= \frac{3}{2\tau}\int_{(n-1)\tau}^{n\tau} f(t)\,dt-\frac{1}{2\tau}\int_{(n-2)\tau}^{(n-1)\tau} f(t)\,dt,\quad
n=2,\ldots,N.
\end{align*}

We now formulate the following Rothe problem.

\bigskip \noindent{\bf Problem ${\cal P}_\tau$}. {\it Find a
	sequence\ $\{u^n_\tau\}_{n=1}^N\subset V$  such that
	\begin{align}
	 \frac{1}{\tau}(u_\tau^1-u_\tau^0)+&Au_\tau^1+\iota^*\xi_\tau^1=f_\tau^1\label{1},\\[2mm]
	 &\xi_\tau^1\in\partial J(\iota u_\tau^1)\label{1a}
	 	\end{align}}
and for $n=2,...,N$ 
\begin{align}
\frac{1}{\tau}\left(\frac{3}{2}\untau-2\untauminus+\frac{1}{2}\untauminuss\right)+&A\untau+\iota^*\xintau=\fntau,\label{2}\\[2mm]
&\xintau\in\partial J(\iota\untau).\label{2a}
\end{align} 	
	
Now we formulate the existence result for Problem $\cP_\tau$.

\begin{theorem}\label{Th_4.1}
	 There exists $\tau_0>0$ such that for all $0<\tau<\tau_0$, Problem $\cP_\tau$ has a solution.   
\end{theorem}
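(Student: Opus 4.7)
The plan is to solve the scheme inductively on $n$, at each step recasting the inclusion (4.3)--(4.6) as $T_\tau^n u_\tau^n \ni b_\tau^n$ for an appropriate multivalued operator $T_\tau^n\colon V\to 2^{V^*}$ and a known right-hand side $b_\tau^n\in V^*$. Denote by $i\colon V\to V^*$ the continuous (linear, monotone) embedding obtained through the chain $V\subset H\subset V^*$, and set $\gamma_1:=1/\tau$, $\gamma_n:=3/(2\tau)$ for $n\geq 2$. I will take
\[
T_\tau^n v := \gamma_n\, iv + Av + \iota^*\partial J(\iota v),
\]
$b_\tau^1 := f_\tau^1+\tfrac{1}{\tau}u_\tau^0$, and, for $n\geq 2$, $b_\tau^n := f_\tau^n+\tfrac{1}{\tau}\bigl(2u_\tau^{n-1}-\tfrac{1}{2}u_\tau^{n-2}\bigr)$. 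Then solving (4.3)--(4.4), respectively (4.5)--(4.6), amounts exactly to finding $u_\tau^n\in V$ with $T_\tau^n u_\tau^n \ni b_\tau^n$, and such a $u_\tau^n$ will be produced from Proposition \ref{prop:Bartosz5} once $T_\tau^n$ is shown to be pseudomonotone and coercive.

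Pseudomonotonicity will be immediate from Proposition \ref{Prop_sum_pseudo}: $A$ is pseudomonotone by $H(A)(i)$; the multivalued map $\iota^*\partial J\circ\iota$ is pseudomonotone by Proposition \ref{Prop_2.1}, whose hypotheses are met thanks to $H(\iota)$ and $H(J)$; and $\gamma_n i$ is pseudomonotone because it is a bounded linear operator which is monotone on the reflexive space $V$, since $\langle iv,v\rangle=|v|^2\geq 0$.

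The substantive step will be coercivity, and it is here that the threshold on $\tau$ will enter. For $v\in V$ and $\xi\in\partial J(\iota v)$, $H(A)(iii)$ together with $H(J)(ii)$, Corollary \ref{Cor_3.2} applied with a parameter $\varepsilon_1>0$ to the quadratic part $d\|\iota v\|_U^2$, and the continuity of $\iota$ used on the linear part $d\|\iota v\|_U$, should yield
\[
\langle T_\tau^n v, v\rangle \geq (\alpha-d\varepsilon_1)\|v\|^2 + \bigl(\gamma_n-\beta-d\bar c_\iota(\varepsilon_1)\bigr)|v|^2 - c\|v\|.
\]
I will pick $\varepsilon_1$ small so that $\alpha-d\varepsilon_1\geq \alpha/2$, and then fix $\tau_0>0$ with $1/\tau_0\geq \beta + d\bar c_\iota(\varepsilon_1)$. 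Since $\gamma_n\geq 1/\tau$ for every $n$, the coefficient of $|v|^2$ will be nonnegative uniformly in $n$ whenever $0<\tau\leq\tau_0$, and the leading term $\tfrac{\alpha}{2}\|v\|^2$ delivers $\inf_{v^*\in T_\tau^n v}\langle v^*,v\rangle/\|v\|\to +\infty$ as $\|v\|\to\infty$. Proposition \ref{prop:Bartosz5} then provides the required $u_\tau^n$, and a routine induction on $n$ assembles the full sequence $\{u_\tau^n\}_{n=1}^N$.

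I expect the main obstacle to be precisely this coercivity step. Without the mass-type contribution $\gamma_n|v|^2$ supplied by the BDF2 time discretization, the lower-order quadratic $d\bar c_\iota(\varepsilon_1)|v|^2$ produced by the Clarke subgradient cannot be absorbed: as $\varepsilon_1\downarrow 0$ the Ehrling constant $\bar c_\iota(\varepsilon_1)$ blows up, so one cannot trade $|v|^2$ against $\|v\|^2$ through the embedding in a uniform way. The trade-off between $1/\tau$ and $\bar c_\iota(\varepsilon_1)$ is exactly what pins down $\tau_0$.
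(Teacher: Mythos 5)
Your proposal is correct and follows essentially the same route as the paper: recast each step as a surjectivity problem for the operator identity-part $+\,A+\iota^*\partial J(\iota\,\cdot)$, get pseudomonotonicity from Propositions \ref{Prop_2.1} and \ref{Prop_sum_pseudo}, and obtain coercivity for small $\tau$ by absorbing the $\bar c_\iota(\varepsilon)|v|^2$ term from Corollary \ref{Cor_3.2} into the $\gamma_n|v|^2$ mass term. The only (immaterial) difference is that the paper multiplies the inclusion through by $\tau$ (resp.\ $\tfrac{2}{3}\tau$) before defining the operator, whereas you keep the factor $\gamma_n=1/\tau$ or $3/(2\tau)$ in front of the identity part.
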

\begin{proof}
We can formulate (\ref{1}) as: find $u_\tau^1\in V$ such that 
\begin{equation}\label{eq1}
\tau f_\tau^1+u_\tau^0\in T_1(u_\tau^1),
\end{equation}
where $T_1\colon V\to 2^{V^*}$ is defined as $T_1(v)=i^*iv+\tau Av+\tau\iota^*\partial J(\iota v)$ for all $v\in V$, where, recall, $i\colon V\to H$ is the identity mapping, and $i^*$ denotes its adjoint operator. We observe, that $T_1$ is pseudomonotone. In fact, the operator $V\ni v\to i^*iv\in V^*$ is pseudomonotone, as it is linear and monotone (cf. \cite{Zeidler1990}). Moreover, the operator $\tau A$ is pseudomonotone by assumption $H(A)$(i) and the operator $\tau\iota^*\partial J(\iota v)$ is pseudomonotone by by assumption $H(J)$(ii) and by Proposition \ref{Prop_2.1}. Hence, the operator $T_1$ is pseudomonotone by Proposition \ref{Prop_sum_pseudo}.\\
In order to check coercivity, let $\eta\in T_1v$, which means, that $\eta=i^*iv+\tau Av+\tau\iota^*\xi$, whith $\xi\in\partial J(\iota v)$. Then, using $H(A)$(ii) and (\ref{3.4}), we have
\begin{align*}
&\skalar{T_1v,v}=|v|^2+\tau\skalar{Av,v}+\tau\dual{\xi,\iota v}{U}\geq\\[2mm]
 &\left(1-\tau\beta-\tau\left(\frac12+d\right)c_{\iota}(\varepsilon)\right)|v|^2+\tau\left(\alpha-\left(\frac12+d\right)\varepsilon\right)\|v\|^2-\frac12d^2. 
\end{align*}    
Taking $\varepsilon$ and $\tau$ small enough, we find out that $\skalar{T_1v,v}\geq c_1\tau\|v\|^2-c_2$ with $c_1, c_2>0$.  Then, it follows that $T_1$ is coercive. Hence, by Proposition \ref{prop:Bartosz5} operator $T_1$ is surjective for $\tau< \tau_0$, where $\tau_0>0$ and problem (\ref{eq1}) has a solution. \\
Next we proceed recursively. For $n=2,...,N$ having $\untauminus$ and $\untauminuss$ we formulate (\ref{2}) as: find $\untau\in V$, such that
\begin{align}\label{eq2}
\frac{2}{3}\tau\fntau+\frac{4}{3}\untauminus-\frac{1}{3}\untauminuss\in T_2(\untau),
\end{align} 
where $T_2\colon V\to 2^{V^*}$ is defined as $T_2(v)=i^*iv+\frac{2}{3}\tau Av+\frac{2}{3}\tau\iota^*\partial J(\iota v)$ for all $v\in V$.
Proceeding analogously as in the first part of the proof, we can easy show that $T_2$ satisfies assumptions of Proposition \ref{prop:Bartosz5} for $\tau$ small enough, which guaranties the solvability of (\ref{eq2}) and completes the proof. 
\end{proof}\\

Now we provide a result on a-priori estimates for the solution of Problem $\cP_\tau$.  We denote by $c$ a generic positive constant independent of $\tau$ that may differ from line to line. Moreover, if $c$ depends on $\varepsilon$, we write $c(\varepsilon)$. \\

\begin{lemma}\label{Lemma_apriori}
Let $\{\untau\}_{n=0}^N$ be a solution of Problem $\cP_\tau$. Then we have
\begin{align}
&\label{3}\tau \sum_{n=0}^N \|\untau\|_V^2\le c,\\[2mm]
&\label{4}\max_{n=0,...,N}|\untau|_H<c,\\[2mm]
&\label{5}\tau \sum_{n=1}^N \left\|\xintau\right\|_{U^*}^2\le c,\\[2mm]
&\label{6}\tau \left\|\frac{u_\tau^1-u_\tau^0}{\tau}\right\|_{V^*}^2 \le c,\\[2mm]
&\label{7}\tau \sum_{n=2}^N\left\|\frac{1}{\tau}(\frac{3}{2}\untau -2\untauminus +\frac{1}{2}\untauminuss)\right\|_{V^*}^2 \le c,\\[2mm]
&\label{7.5}\sum_{n=2}^N \left|\untau-2\untauminus+\untauminuss\right|^2\le c.
\end{align}
\end{lemma}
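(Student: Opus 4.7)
The key tool is the discrete BDF2 energy identity: for any $a,b,c \in H$,
\begin{align*}
\left(\tfrac{3}{2}a - 2b + \tfrac{1}{2}c,\, a\right)_H = \tfrac{1}{4}\bigl(E(a,b) - E(b,c)\bigr) + \tfrac{1}{4}|a - 2b + c|^2,
\end{align*}
where $E(x,y) := |x|^2 + |2x-y|^2$. Setting $E_n := E(\untau, \untauminus)$, this furnishes a telescoping structure that plays the role of the standard $2(a-b,a) = |a|^2 - |b|^2 + |a-b|^2$ used in the Euler-scheme analysis of \cite{Kalita2013}. The starting index $n=1$ must be handled separately: pair (\ref{1}) with $\tau u_\tau^1$, apply the Euler identity to the difference term, and use $H(A)$(iii), $H(J)$(ii), (\ref{3.4}), and (\ref{Young}); after absorbing the $\iota^*\xi_\tau^1$-contribution into the coercivity term by choosing $\varepsilon$ sufficiently small, and noting $\tau\|f_\tau^1\|_{V^*}^2 \leq \|f\|_{\cV^*}^2$ by H\"older, one obtains $|u_\tau^1|^2 + \tau\|u_\tau^1\|^2 \leq c$. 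Combined with the boundedness of $|u_\tau^0|$ in $H$ from (\ref{0}), this makes $E_1$ bounded uniformly in $\tau$.

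For $n \geq 2$, pair (\ref{2}) with $\tau\untau$, apply the BDF2 identity above, and estimate the remaining terms: the $A$-term via $H(A)$(iii), the subgradient pairing via $H(J)$(ii) together with (\ref{3.4}), and the force term via Young's inequality. Choosing $\varepsilon$ small enough absorbs the $\|\untau\|^2$ contributions from the subgradient and the force into the coercivity term $\tau\alpha\|\untau\|^2$. Summing from $n=2$ to arbitrary $m \leq N$ and using $\tau\sum_n\|\fntau\|_{V^*}^2 \leq c\|f\|_{\cV^*}^2$ (directly from the definition of $\fntau$ and H\"older), one arrives at
\begin{align*}
E_m + \sum_{n=2}^m |\untau - 2\untauminus + \untauminuss|^2 + \tau\sum_{n=2}^m \|\untau\|^2 \leq c + c\tau\sum_{n=2}^m |\untau|^2.
\end{align*}
Since $|\untau|^2 \leq E_n$, the discrete Gronwall lemma (valid for $\tau$ sufficiently small) yields $\max_m E_m \leq c$, which is (\ref{4}); feeding this back into the inequality produces (\ref{3}) (together with the bound on $u_\tau^0$ from (\ref{-1}) and the bound on $u_\tau^1$ above) as well as (\ref{7.5}). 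Bound (\ref{5}) then follows term-wise from $\|\xintau\|_{U^*}^2 \leq 2d^2(1 + \|\iota\untau\|_U^2)$, (\ref{3.4}), and the just-proven (\ref{3})-(\ref{4}).

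Estimates (\ref{6}) and (\ref{7}) are immediate corollaries: rewrite (\ref{1}) and (\ref{2}) with the difference quotient isolated on the left, take $V^*$-norms, square, and bound $\|A\untau\|_{V^*}^2$ via $H(A)$(ii) and $\|\iota^*\xintau\|_{V^*}^2$ via the continuity of $\iota^*$; the required estimates then reduce to (\ref{3}), (\ref{5}), and $\tau\sum_n\|\fntau\|_{V^*}^2 \leq c\|f\|_{\cV^*}^2$. The main obstacle is the simultaneous calibration of small $\varepsilon$ in (\ref{3.4}) and small $\tau$: the nonmonotone $\iota^*\partial J(\iota\cdot)$ contribution must be fully absorbed into the coercivity quadratic so that only $|\untau|^2$ remains on the Gronwall side, and the $n=1$ step additionally depends crucially on the scaling (\ref{-1}) of $u_\tau^0$, which is exactly what prevents $E_1$ from exploding despite the weak regularity allowed for the discrete initial datum.
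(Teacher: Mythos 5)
Your argument is correct and is essentially the paper's own proof: your BDF2 energy identity is exactly identity \eqref{13}, the first step is treated by testing \eqref{1} with $u_\tau^1$ and the Euler identity \eqref{8}, the telescoping-plus-discrete-Gronwall scheme yields \eqref{4}, \eqref{3} and \eqref{7.5}, and \eqref{5}--\eqref{7} are read off from the discrete equations as you describe. One minor misattribution in your closing remark: $E_1$ is bounded already by \eqref{0} together with the $H$-bound on $u_\tau^1$ (it involves only $H$-norms), whereas the scaling \eqref{-1} is needed solely to control the $n=0$ term $\tau\|u_\tau^0\|_V^2$ in \eqref{3}.
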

\begin{proof}
We test \eqref{1} with $u_\tau^1$ to obtain
\[
(\uone-\uzero,\uone)+ \tau \langle A\uone,\uone\rangle + \tau \langle \iota^*\xi_\tau^1,\uone\rangle = \tau\langle f_\tau^1,\uone\rangle.
\]	
Now we deal with every term of resulting equality in the following way
\begin{align}
&\label{8}(\uone-\uzero,\uone)=\frac{1}{2} |\uone|^2 -\frac{1}{2}|\uzero|^2 + \frac{1}{2}|\uone-\uzero|^2,\\[2mm]
&\label{9}\tau \langle A\uone,\uone\rangle \ge \tau \alpha \|\uone\|^2 -\tau\beta |\uone|^2,\\[2mm]
&\label{10}\tau \langle \iota^*\xi_{\tau}^1,\uone\rangle =\tau \langle \xi_{\tau}^1,\iota\uone\rangle_U \ge - \tau \|\xi_{\tau}^1\|_{U^*} \|\iota \uone\|_U \ge -\tau d(1+\|\iota \uone\|_{U})\|\iota\uone\|_U \ge \\[2mm]
&\nonumber -\tau\left(\frac{1}{2}+d\right)\varepsilon\|\uone\|^2 -\tau\left(\frac{1}{2}+d\right)c_{\iota}(\varepsilon) |\uone|^2 - \tau\frac{1}{2}d^2,\\[2mm]
&\label{11}\tau\langle f_\tau^1,\uone\rangle \le \varepsilon\tau \|\uone\|^2 + \tau c(\varepsilon) \|f_\tau^1\|_{V^*}^2. 
\end{align}
It is clear that (\ref{9}) follows from $H(A)$(iii). Moreover, (\ref{10}) is a consequence $H(J)$(ii) and Corollary \ref{Cor_3.2}.  Finally, (\ref{11}) follows from (\ref{Young}) with $c(\varepsilon)=1/4\varepsilon$.  Summing up \eqref{8}--\eqref{11} we get
\begin{align}
&\nonumber\left(\frac{1}{2} -\tau\beta -\tau\left(\frac{1}{2} +d\right)c_{\iota}(\varepsilon)\right)|\uone|^2 + \tau \left(\alpha-\varepsilon\left(\frac{3}{2}+d\right)\right)\|\uone\|^2   \\[2mm]
&\label{12}+ \frac{1}{2}|\uone-\uzero|^2\le \frac{1}{2}|\uzero|^2 + \tau\left(\frac{1}{2}d^2+c(\varepsilon)\|f_\tau^1\|_{V^*}^2\right).
\end{align}
Now we test \eqref{2} with $\untau$ and get
\[
\left(\frac{3}{2}\untau-2\untauminus +\frac{1}{2}\untauminuss,\untau\right) + \tau \langle A\untau,\untau\rangle + \langle\tau\xintau,\iota\untau\rangle = \tau \langle \fntau,\untau\rangle.
\]
We deal with each terms of the last equality as follows
\begin{align}\label{13}
\qquad\qquad\quad\,\,\nonumber\left(\frac{3}{2} \untau-2\untauminus +\frac{1}{2}\untauminuss,\untau\right) =& \frac{1}{4}\left(|\untau|^2 + |2\untau-\untauminus|^2-|\untauminus|^2\right.
\\[2mm]
&-\left.|2\untauminus-\untauminuss|^2 + |\untau-2\untauminus+\untauminuss|^2\right),
\end{align}
\begin{align}
&\label{14}\tau \langle A\untau,\untau\rangle \ge \tau \alpha \|\untau\|^2-\tau\beta|\untau|^2,\\[2mm]
& \label{15}\tau\langle \iota^*\xintau,\untau\rangle \ge -\tau\left(\frac{1}{2}+d\right)\varepsilon\|\untau\|^2 -\tau\left(\frac{1}{2} +d\right)c_{\iota}(\varepsilon)|\untau|^2-\tau\frac{1}{2}d^2,\\[2mm]
&\label{16}\tau \langle \fntau,\untau\rangle \le \varepsilon \tau \|\untau\|^2 + c(\varepsilon)\tau \|\fntau\|_{V^*}^2.
\end{align}
We sum up \eqref{13}--\eqref{16} and get
\begin{align}
&\nonumber\frac14|\untau|^2 +\frac14|2\untau-\untauminus|^2 +\frac14|\untau-2\untauminus+\untauminuss|^2 + \\[2mm]
& \nonumber\tau\left(\alpha-\left(\frac32+d\right)\right)\|\untau\|^2\le 
\frac14|\untauminus|^2 +\frac14|2\untauminus-\untauminuss|^2+ \\[2mm]
&\label{17} \tau\left(\beta+\left(\frac12 +d\right)c_{\iota}(\varepsilon)\right)|\untau|^2+\tau\frac12d^2+c(\varepsilon)\tau \|\fntau\|_{V^*}^2.
\end{align}
Taking $\tau$ and $\varepsilon$ small enough we get from \eqref{12}
\[
|\uone|^2 + \tau\|\uone\|^2 +|\uone-\uzero|^2\le c\left(|\uzero|^2+\tau\|f_\tau^1\|_{V^*}^2\right).
\]
As $\uzero \to u_0$ in $H$, $\uzero$ is bounded. Moreover, $\tau\|f_\tau^1\|^2_{V^*}$ is bounded, hence we have
\begin{equation}
\label{18}|\uone|^2 +|\uone-\uzero|^2 + \tau \|\uone\|^2\le c.
\end{equation}
We multiply \eqref{12} by $\frac12$ and, after simple reformulation, we conclude that
\begin{align}
&\nonumber\frac14|\uone|^2+\frac12 \tau\left(\alpha-\varepsilon\left(\frac32+d\right)\right)\|\uone\|^2 + \frac14|\uone-\uzero|^2\le \\[2mm]
&\label{19}\frac14|\uzero|^2 + \frac14\tau d^2 + \frac12 c(\varepsilon)\tau\|f_\tau^1\|^2_{V^*} +\frac12\tau\left(\beta+\left(\frac12+d\right)c_{\iota}(\varepsilon)\right)|\uone|^2.
\end{align}
We replace $n$ by $k$ in (\ref{17}) and sum up \eqref{19} with \eqref{17} for $k=2,\ldots,n$. Hence we obtain
\begin{align}
&\nonumber\frac14 |\untau|^2 +\frac14|2\untau-\untauminus|^2 +\frac14 |\uone-\uzero|^2+\frac14 \sum_{k=1}^k |\uktau-2\uktauminus+\uktauminuss|^2  \\[2mm]
&+\nonumber\frac12 \sum_{k=1}^n \tau\left(\alpha-\varepsilon\left(\frac32+d\right)\right)\|\untau\|^2 \le \frac14 |\uzero|^2 +\frac14|2\uone-\uzero|^2 \\[2mm]
&\label{20} + \frac12 \tau c(\varepsilon) \sum_{k=1}^n \|\fktau\|^2_{V^*} + \frac12 n\tau c^2 +\sum_{k=1}^n\tau \left(\beta+\left(\frac12+d\right)c_{\iota}(\varepsilon)\right)|\uktau|^2.
\end{align}
From \eqref{20} we conclude that get 
\begin{align*}
|\untau|^2&\le |\uzero|^2+|2\uone-\uzero|^2\\[2mm]
& +2\tau c(\varepsilon)\sum_{k=1}^n \|\fktau\|^2_{V^*} + 2Tc^2+4\sum_{k=1}^n\tau\left(\beta+\left(\frac12+d\right)c_{\iota}(\varepsilon)\right)|\uktau|^2.
\end{align*}
For $\tau$ small enough we apply Gronwall Lemma and get
\begin{equation}
\label{21}\max_{n=1,...,N} |\uktau|^2 \le c\left(|\uzero|^2 + |2\uone-\uzero|^2 + \tau\sum_{k=1}^N \|\fktau\|^2_{V^*} +T\right).
\end{equation}
By (\ref{0}) $|\uzero|$ is bounded  and by \eqref{18} we have
\begin{equation}
\label{22}|2\uone-\uzero|^2=|\uone+(\uone-\uzero)|^2\le 2(|\uone|^2+|\uone-\uzero|^2) \le c.
\end{equation}
Moreover 
\begin{equation}
\label{23}\tau\sum_{k=1}^N \|\fktau\|^2_{V^*} \le c\|f\|^2_{L^2(0,T;V^*)}.
\end{equation}
Hence, by \eqref{21} and boundedness of $|\uzero|$ we get \eqref{4}. Applying \eqref{22}, \eqref{23}, \eqref{4} and \eqref{-1} in \eqref{20} we get \eqref{3}. By $H(J)$(ii) and \eqref{3} we get \eqref{5}. 
From \eqref{1}, $H(A)$(ii), $H(J)$(ii) and \eqref{3}
\begin{align}
\nonumber\tau \left\|\frac{\uone-\uzero}{\tau}\right\|^2_{V^*} &\le c\tau \left(\|f_\tau^1\|^2_{V^*} + \|A\uone\|^2_{V^*} + \|\iota^*\xi_{\tau}^1\|^2_{V^*}\right) \\[2mm]
&\label{24}
\le c\tau\left(\|f_\tau^1\|^2_{V^*}+ a^2 +b^2\|\uone\|^2 + d\|\iota\|^2_{{\cal L}(V,U)}(1+\|\uone\|^2)\right)\le c,
\end{align}
which proves (\ref{6}). Moreover, from \eqref{2}, using again $H(A)$(ii), $H(J)$(ii), we get
\begin{align}\label{25}
&\nonumber\tau \sum_{n=2}^N \left\|\frac{1}{\tau}(\frac32 \untau-2\untauminus+\frac12\untauminuss)\right\|_{V^*}^2\\
& \le c\left(\tau \sum_{n=2}^N \|\fntau\|^2_{V^*} +\left(b^2+\|\iota\|^2_{{\cal L}(U,V)}d^2\right)\tau \sum_{n=2}^N \|\untau\|^2 + \left(a^2+\|\iota\|^2_{{\cal L}(U,V)}d^2\right)T\right).
\end{align}
Applying (\ref{3}) and (\ref{23}) in (\ref{25}) we complete the proof of \eqref{7}. Finally, \eqref{7.5} follows from \eqref{0}, \eqref{4}, \eqref{20}, \eqref{22}, \eqref{23}.
\end{proof}\\

Now we provide another lemma. 

\begin{lemma}\label{Lemma_u0u1} The following convergence holds 
	\[
	\uone-\uzero\to 0 \  \ \mbox{strongly in} \ H\, \mbox{as} \ \tau \to 0.
	\]
\end{lemma}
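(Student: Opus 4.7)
The approach is to combine a sharpened version of the energy estimate from the proof of Lemma \ref{Lemma_apriori} with a weak convergence argument in $H$, and then invoke the Radon--Riesz property. Concretely, I aim to establish (i) $\limsup_{\tau\to 0}|\uone|^2\le|u_0|^2$ and (ii) $\uone\rightharpoonup u_0$ weakly in $H$; in view of \eqref{0} these together yield $\uone\to u_0$ strongly in $H$, and hence $\uone-\uzero\to 0$ in $H$.

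For (i), I would retrace the computation \eqref{8}--\eqref{12} of Lemma \ref{Lemma_apriori}, which (for $\varepsilon$ and $\tau$ small enough) provides
\[
\tfrac12|\uone|^2+\tfrac12|\uone-\uzero|^2\le\tfrac12|\uzero|^2+\tau R_\tau+c(\varepsilon)\tau\|f_\tau^1\|_{V^*}^2,
\]
where $R_\tau$ is uniformly bounded by virtue of \eqref{4}. Dropping the non-negative term $\tfrac12|\uone-\uzero|^2$ and using $|\uzero|\to|u_0|$ from \eqref{0}, the conclusion (i) will follow provided $\tau\|f_\tau^1\|_{V^*}^2\to 0$. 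This last claim is the main obstacle, since the proof of Lemma \ref{Lemma_apriori} only needed that this quantity stays bounded. I would dispatch it by Jensen's inequality,
\[
\tau\|f_\tau^1\|_{V^*}^2=\tau\left\|\tfrac1\tau\int_0^\tau f(s)\,ds\right\|_{V^*}^2\le\int_0^\tau\|f(s)\|_{V^*}^2\,ds,
\]
whose right-hand side tends to zero by absolute continuity of the Lebesgue integral, as $\|f(\cdot)\|_{V^*}^2\in L^1(0,T)$ by assumption $H(f)$.

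For (ii), the bound \eqref{4} makes $\{\uone\}$ weakly relatively compact in $H$. From \eqref{6} one gets $\|\uone-\uzero\|_{V^*}^2 = \tau\cdot\tau\|(\uone-\uzero)/\tau\|_{V^*}^2\le c\tau\to 0$, hence $\uone-\uzero\to 0$ in $V^*$; combined with \eqref{0} and the continuous embedding $H\hookrightarrow V^*$, this yields $\uone\to u_0$ strongly in $V^*$. Any weak-$H$ cluster point of $\{\uone\}$ must coincide, via the same continuous embedding, with this $V^*$-limit, so $u_0$ is the unique cluster point, and therefore $\uone\rightharpoonup u_0$ weakly in $H$.

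Combining (i) and (ii): weak lower semicontinuity of the $H$-norm applied to (ii) gives $\liminf|\uone|^2\ge|u_0|^2$, which together with (i) forces $|\uone|\to|u_0|$. Weak convergence in $H$ together with convergence of the norm implies strong convergence, so $\uone\to u_0$ in $H$, and the triangle inequality with \eqref{0} completes the argument.
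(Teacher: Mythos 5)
Your proof is correct, and it rests on the same three ingredients as the paper's: testing \eqref{1} with $\uone$ (you reuse the resulting energy estimate \eqref{12} rather than re-deriving it), Jensen's inequality together with absolute continuity of the integral to kill the $\tau\|f_\tau^1\|_{V^*}^2$ term, and identification of the weak $H$-limit through the strong $V^*$-convergence coming from \eqref{6} and \eqref{0}. Where you diverge is the endgame. The paper shows $\uone-\uzero\rightharpoonup 0$ weakly in $H$ and then expands $|\uone-\uzero|^2=(\uone-\uzero,\uone)-(\uone-\uzero,\uzero)$: the second term vanishes by pairing the weak limit against the strongly convergent $\uzero$, while the first is squeezed between $(\uone-\uzero,\uzero)$ and the right-hand side of the tested equation, whose $\limsup$ is shown to be nonpositive via $H(A)$(iii) and $H(J)$(ii). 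You instead prove $\uone\rightharpoonup u_0$ weakly in $H$ together with $\limsup|\uone|^2\le|u_0|^2$ and invoke the Radon--Riesz property of the Hilbert space. Your route is arguably cleaner in that it recycles \eqref{12} verbatim and avoids re-estimating the $\langle A\uone,\uone\rangle$ and $\langle\xi_\tau^1,\iota\uone\rangle_{U^*\times U}$ terms a second time; the paper's route avoids any appeal to norm convergence and works directly with the difference $\uone-\uzero$. Both are complete; the only point you should make explicit when writing this up is that the coefficient $\bigl(\tfrac12-\tau\beta-\tau(\tfrac12+d)c_\iota(\varepsilon)\bigr)$ in \eqref{12} is restored to $\tfrac12$ at the cost of a term of order $\tau|\uone|^2$, which is admissible precisely because $|\uone|$ is already known to be bounded from \eqref{18} (or \eqref{4}), so there is no circularity.
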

\begin{proof}
For the proof it is enough to show, that for any subsequence of the sequence $\{\uone-\uzero\}$ one can find a subsequence, which converges to $0$ strongly in $H$. Suppose than, that  $\{\uone-\uzero\}$ is any subsequence of the original sequence (denoted by the same symbol for simplicity). From \eqref{18} we know that $\{\uone-\uzero\}$ is bounded in $H$, hence there exists $\eta\in H$, such that for a subsequence (again denoted by the same symbol), there holds  
	\begin{equation}
	\label{26}\uone-\uzero \to \eta \ \ \mbox{weakly in} \ H.
	\end{equation}	
From \eqref{6} we have	
\begin{align*}
& \|\uone-\uzero\|_{V^*}^2\le \tau c \to 0.
\end{align*}	
Hence $\uone-\uzero\to 0$ strongly in $V^*$, hence also weakly in $V^*$. On the other hand, from (\ref{26}), $\uone-\uzero\to \eta$ weakly in $H$, hence also weakly in $V^*$. From the uniqueness of weak limit in $V^*$ we have $\eta=0$, hence 
\begin{equation}
\label{29}\uone-\uzero \to 0 \ \ \mbox{weakly in} \ H.
\end{equation}
In what follows we will use the identity 
\begin{equation}\label{29a}
|\uone-\uzero|^2 = (\uone-\uzero,\uone-\uzero)=(\uone-\uzero,\uone)-(\uone-\uzero,\uzero).
\end{equation}
After some reformulation, we have from \eqref{1} 
\begin{align}
\nonumber(\uone-\uzero,\uzero)&=(\uone-\uzero,\uone)-|\uone-\uzero|^2\le (\uone-\uzero,\uone)\\[2mm]
\label{30}&=\tau \left(\langle f_\tau^1,\uone\rangle -\langle A\uone,\uone\rangle - \langle \xi_\tau^1,\iota\uone\rangle_{U^*\times U}\right).
\end{align}
From \eqref{0} and \eqref{29} we have
\begin{equation}
\label{31}(\uone-\uzero,\uzero)\to 0.
\end{equation}
Now we estimate the right-hand side of \eqref{30} in two steps. First we get
\begin{equation}
\label{32}\tau \langle f_\tau^1,\uone\rangle \le \tau \|f_\tau^1\|_{V^*}\|\uone\|=(\tau\|f_\tau^1\|^2_{V^*})^{1/2}(\tau\|\uone\|^2)^{1/2}.
\end{equation}
By Jensen's inequality
\begin{equation}
\nonumber\tau \|f_\tau^1\|_{V^*}=\tau\left\|\frac{1}{\tau}\int_0^\tau f(t)\, dt\right\|_{V^*}^2 \le \frac{1}{\tau} \left(\int_0^\tau \|f(t)\|_{V^*}\, dt\right)^2\le \int_0^\tau \|f(t)\|_{V^*}^2\, dt \ \to 0.
\end{equation}
and by \eqref{3} $\label{34}\tau\|\uone\|^2\le c$. 
Hence, from \eqref{32} we have 
\begin{equation}
\label{35}\tau \langle f_\tau^1,\uone\rangle \to 0.
\end{equation}
as $\tau\to 0$. Then, using $H(A)$(iii) and a slight modification of (\ref{3.4}) we get
\begin{align*}
&-\tau (\langle A\uone,\uone\rangle- \langle \xi_\tau^1,\iota\uone\rangle_{U^*\times U})\\[2mm]
&\leq \tau\left( -(\alpha-\varepsilon)\|\uone\|^2+(\beta+c(\varepsilon))|\uone|^2+c \right)\leq  \tau(\beta+c(\varepsilon)|\uone|^2+c)
\end{align*}
whenever $\varepsilon<\alpha$. 
From \eqref{4} $|\uone|$ is bounded, hence 
\begin{equation}
\label{36}-(\langle A\uone,\uone\rangle- \langle \xi_\tau^1,\iota\uone\rangle_{U^*\times U}) \to 0.
\end{equation}
From \eqref{30},\eqref{31},\eqref{35},\eqref{36} and the squeeze theorem we get
\begin{equation}
\nonumber(\uone-\uzero,\uone)\to 0.
\end{equation}
Combining it with (\ref{29a}) we obtain the thesis.
\end{proof}

\medskip

\noindent Basing on the solution of the Rothe problem ${\cal P}_\tau$  we define the following functions $\bar{u}_{\tau}, u_\tau\colon [0,T]\to V$, $\bar{\xi}_\tau\colon[0,T]\to U^*$ and $\bar{f}_\tau\colon[0,T]\to V^*$.
\[
\bar{u}_\tau(0)=\uzero  \ \mbox{and} \ \bar{u}_\tau(t)=\uktau \ \ \mbox{for} \ t\in ((n-1)\tau, n\tau], \ n=1,\ldots,N,
\]
\[
u_\tau(t)= \begin{cases}
\frac32\uone-\frac12 \uzero +(\uone-\uzero)\frac{t-\tau}{\tau}, \ \mbox{for} \ t\in [0,\tau] \\[2mm]
\frac32\untau-\frac12\untauminus + (\frac32\untau -2\uktauminus +\frac12\untauminuss)\frac{t-n\tau}{\tau}, \ \ t\in [(n-1)\tau,n\tau],\,n=2,\dots,N,
\end{cases}
\]
\[
\bar{\xi}_\tau(t)= \xiktau \ \mbox{for} \ t\in((n-1)\tau, n\tau], \ n=1,\ldots,N,
\]
\[
\bar{f}_\tau(t) = \fktau \ \mbox{for} \ t\in((n-1)\tau, n\tau], \ n=1,\ldots,N.
\]
\begin{lemma}
	We have the following convergence result:
	\begin{equation}
	\label{40}\|u_\tau - \bar{u}_\tau\|_{L^2(0,T;V^*)} \to 0.
	\end{equation}
\end{lemma}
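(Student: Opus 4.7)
The plan is to exploit the piecewise linear structure of $u_\tau$ together with the piecewise constant structure of $\bar u_\tau$, and to combine this with the a priori bounds \eqref{6} and \eqref{7}. For $n\ge 1$ set $a_n := \uktau - \uktauminus$, and for $n\ge 2$ set $b_n := \tfrac{3}{2}\uktau - 2\uktauminus + \tfrac{1}{2}\uktauminuss$. A direct calculation from the definitions gives, on the interval $((n-1)\tau,n\tau)$, the unified identity
\[
u_\tau(t)-\bar u_\tau(t)=\tfrac{1}{2}a_n+(t-n\tau)\,u_\tau'(t),
\]
valid for every $n=1,\dots,N$, where $u_\tau'(t)=a_1/\tau$ on $(0,\tau)$ and $u_\tau'(t)=b_n/\tau$ on $((n-1)\tau,n\tau)$ for $n\ge 2$. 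Since $|t-n\tau|\le\tau$ on each such interval, this produces the pointwise bound
\[
\|u_\tau(t)-\bar u_\tau(t)\|_{V^*}^{2}\le\tfrac{1}{2}\|a_n\|_{V^*}^{2}+2\tau^{2}\|u_\tau'(t)\|_{V^*}^{2}.
\]

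Integrating over $(0,T)$ then reduces the claim to showing that both $\tau\sum_{n=1}^{N}\|a_n\|_{V^*}^{2}$ and $\tau^{2}\int_{0}^{T}\|u_\tau'(t)\|_{V^*}^{2}\,dt$ tend to zero as $\tau\to 0$. The second quantity is controlled at once by \eqref{6} and \eqref{7}, since $\int_{0}^{T}\|u_\tau'(t)\|_{V^*}^{2}\,dt = \tau\|a_1/\tau\|_{V^*}^{2}+\tau\sum_{n=2}^{N}\|b_n/\tau\|_{V^*}^{2}\le c$ independently of $\tau$.

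The heart of the proof is therefore to show $\sum_{n=1}^{N}\|a_n\|_{V^*}^{2}\le c\tau$. From the definition of $b_n$ we read off the recursion $a_n=\tfrac{2}{3}b_n+\tfrac{1}{3}a_{n-1}$ for $n\ge 2$, and the elementary inequality $(x+y)^2\le 2(x^2+y^2)$ gives $\|a_n\|_{V^*}^{2}\le\tfrac{8}{9}\|b_n\|_{V^*}^{2}+\tfrac{2}{9}\|a_{n-1}\|_{V^*}^{2}$. Summing from $n=2$ to $n=N$ and absorbing the shifted tail $\tfrac{2}{9}\sum_{n=1}^{N-1}\|a_n\|_{V^*}^{2}$ into the left-hand side (the contractive coefficient $2/9<1$ is what makes this work) yields $\sum_{n=1}^{N}\|a_n\|_{V^*}^{2}\le C\bigl(\|a_1\|_{V^*}^{2}+\sum_{n=2}^{N}\|b_n\|_{V^*}^{2}\bigr)$. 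Now \eqref{6} implies $\|a_1\|_{V^*}^{2}\le c\tau$ and \eqref{7} implies $\sum_{n=2}^{N}\|b_n\|_{V^*}^{2}\le c\tau$, so finally $\sum_{n=1}^{N}\|a_n\|_{V^*}^{2}\le c\tau$. Combining everything gives $\|u_\tau-\bar u_\tau\|_{L^{2}(0,T;V^*)}^{2}\le c\tau^{2}\to 0$, as desired.

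The main (though modest) obstacle is the recursion bookkeeping: one must shift indices carefully and exploit the strict contraction $2/9<1$ to close the estimate on $\sum\|a_n\|_{V^*}^{2}$. No single inequality in the chain is delicate, but without this contractive absorption step the available bounds on $b_n$ would not be enough to conclude, since $a_n$ is not directly controlled by \eqref{6} or \eqref{7}.
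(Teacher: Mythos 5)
Your proof is correct, and it differs from the paper's in a way worth noting. Both arguments start from the explicit piecewise formula for $u_\tau-\bar u_\tau$, but the paper groups the terms on $((n-1)\tau,n\tau]$ as $\bigl(\tfrac32 u_\tau^n-2u_\tau^{n-1}+\tfrac12 u_\tau^{n-2}\bigr)\tfrac{t-(n-1/2)\tau}{\tau}-\tfrac14\bigl(u_\tau^n-2u_\tau^{n-1}+u_\tau^{n-2}\bigr)$ and then estimates the three resulting sums using \eqref{6}, \eqref{7} \emph{and} the $H$-bound \eqref{7.5} on second differences (together with the embedding of $H$ into $V^*$). You instead regroup as $\tfrac12(u_\tau^n-u_\tau^{n-1})+(t-n\tau)\,u_\tau'(t)$, which eliminates the second-difference term entirely and replaces \eqref{7.5} by the recursion $a_n=\tfrac23 b_n+\tfrac13 a_{n-1}$ with contraction factor $2/9<1$; this recursion is essentially the same $\delta_i$-manipulation the paper performs later, in the proof of \eqref{47}, to control the $BV^2$ seminorm, so you are re-deriving in advance an estimate the paper also needs (no circularity, since that lemma comes after this one). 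What your route buys is a sharper rate: the dominant term in the paper's bound is $c\,\tau\sum_n|u_\tau^n-2u_\tau^{n-1}+u_\tau^{n-2}|^2\le c\tau$, giving $\|u_\tau-\bar u_\tau\|_{L^2(0,T;V^*)}=O(\sqrt\tau)$, whereas your estimate yields $O(\tau)$ and dispenses with \eqref{7.5} altogether. All the individual steps check out: the unified identity holds for $n=1$ as well as $n\ge 2$, the absorption of the shifted tail is legitimate because the sums are finite, and \eqref{6} and \eqref{7} do give $\|a_1\|_{V^*}^2\le c\tau$ and $\sum_{n=2}^N\|b_n\|_{V^*}^2\le c\tau$.
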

\begin{proof}
	We observe that
	\[
	u_\tau - \bar{u}_\tau = 
	\begin{cases}
	(\uone - \uzero)\frac{t-\half\tau}{\tau} \ \mbox{for} \ t\in (0,\tau] \\[3mm]
	(\frac32\untau -2\untauminus +\frac12\untauminuss)\frac{t-(n-\half)\tau}{\tau}-\frac14 (\untau-2\untauminus+\untauminuss) \ \mbox{for} \ t\in((n-1)\tau, n\tau]
	\end{cases}	
	\]	
Hence, we conclude
\begin{align}
& \nonumber\|u_\tau-\bar{u}_\tau\|_{L^2(0,T;V^*)}^2 = \int_0^\tau \|u_\tau-\bar{u}_\tau\|_{V^*}^2 \, dt + \sum_{n=2}^N \int_{(n-1)\tau}^{n\tau} \|u_\tau - \bar{u}_\tau\|_{V^*}^2\, dt\le \\[2mm]
&\nonumber\int_0^\tau \|\uone-\uzero\|_{V^*}^2 \left|\frac{t-\half \tau}{\tau}\right|^2\, dt + 2\sum_{n=2}^N \int_{(n-1)\tau}^{n\tau} \left\|\frac32 \untau-2\untauminus + \frac12\untauminuss\right\|_{V^*}^2\left|\frac{t-(n-\frac12)\tau}{\tau}\right|^2\, dt \\[2mm]
& \nonumber+2 \cdot\frac{1}{16} \sum_{n=2}^N \int_{(n-1)\tau}^{n\tau} \|\untau-2\untauminus +\untauminuss\|_{V^*}^2\,dt =\frac{1}{12} \tau  \|\uone-\uzero\|_{V^*}^2 \\[2mm]
&\nonumber + \frac{1}{6} \tau \sum_{n=2}^N  \left\|\frac32 \untau-2\untauminus + \frac12\untauminuss\right\|_{V^*}^2 + \frac18\|\iota^*\| \tau \sum_{n=2}^N \left|\untau-2\untauminus +\untauminuss\right|^2. 
\end{align}
Hence, the result follows from \eqref{6}--\eqref{7.5}.
\end{proof}

\begin{lemma}\label{Lemma_apriori_2}
	The functions $\bar{u}_\tau, u_\tau$ and $\bar{\xi}_\tau$ satisfy:
	\begin{align}
	&\label{42}\|\bar{u}_\tau\|_{L^2(0,T;V)} \le c,\\[2mm]
	&\label{43}\|u_\tau\|_{L^2(0,T;V)}\le c,\\[2mm]
	&\label{44}\|\bar{u}_\tau\|_{L^\infty(0,T;H)}\le c,\\[2mm]
	&\label{45}\|u_\tau\|_{L^\infty(0,T;H)}\le c,\\[2mm]
	&\label{46}\|\bar{\xi}_\tau\|_{L^2(0,T;U^*)}\le c,\\[2mm]
	&\label{47}\|\bar{u}_\tau\|_{M^{2,2}(0,T;V;V^*)}\le c,\\[2mm]
	&\label{47.5}\|u_\tau'\|_{L^2(0,T;V^*)}\le c.
	\end{align}	
\end{lemma}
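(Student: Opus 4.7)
All seven bounds should reduce to applications of Lemma \ref{Lemma_apriori} once the definitions of $\bar{u}_\tau$, $u_\tau$ and $\bar{\xi}_\tau$ are unpacked; the only genuinely subtle part is the $BV^2(0,T;V^*)$ contribution in \eqref{47}.

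For the piecewise constant objects the integrals and suprema over $[0,T]$ collapse to discrete sums and maxima, giving $\|\bar{u}_\tau\|_{L^2(0,T;V)}^2=\tau\sum_{n=1}^N\|u_\tau^n\|^2$, $\|\bar{u}_\tau\|_{L^\infty(0,T;H)}=\max_{n=0,\ldots,N}|u_\tau^n|$ and $\|\bar{\xi}_\tau\|_{L^2(0,T;U^*)}^2=\tau\sum_{n=1}^N\|\xi_\tau^n\|_{U^*}^2$, so \eqref{42}, \eqref{44} and \eqref{46} follow directly from \eqref{3}, \eqref{4} and \eqref{5}. Likewise $u_\tau'$ is piecewise constant, equal to $\frac{u_\tau^1-u_\tau^0}{\tau}$ on $(0,\tau)$ and to $\frac{1}{\tau}(\tfrac{3}{2}u_\tau^n-2u_\tau^{n-1}+\tfrac{1}{2}u_\tau^{n-2})$ on $((n-1)\tau,n\tau)$ for $n\ge 2$, so \eqref{47.5} is the sum of \eqref{6} and \eqref{7}.

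For the piecewise linear function $u_\tau$, on $((n-1)\tau,n\tau]$ the affine factor $\frac{t-n\tau}{\tau}$ lies in $[-1,0]$, so the triangle inequality yields pointwise estimates of the form $\|u_\tau(t)\|\le c(\|u_\tau^n\|+\|u_\tau^{n-1}\|+\|u_\tau^{n-2}\|)$ and $|u_\tau(t)|\le c(|u_\tau^n|+|u_\tau^{n-1}|+|u_\tau^{n-2}|)$, with the obvious modification on $(0,\tau)$. Squaring, integrating over each subinterval, summing, and using \eqref{3} together with $\tau\|u_\tau^0\|^2\le c$ from \eqref{-1} gives \eqref{43}; taking the pointwise maximum and using \eqref{4} together with the boundedness of $|u_\tau^0|$ coming from \eqref{0} gives \eqref{45}.

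The main obstacle is the $BV^2$ part of \eqref{47}, since the seminorm is a supremum over arbitrary partitions of a step function and naive coarsening can in principle produce a factor $N$. My plan is to first control the variation along the canonical partition. Writing $\tfrac{3}{2}u_\tau^n-2u_\tau^{n-1}+\tfrac{1}{2}u_\tau^{n-2}=\tfrac{3}{2}d_n-\tfrac{1}{2}d_{n-1}$ with $d_n:=u_\tau^n-u_\tau^{n-1}$, one has $d_n=\tfrac{2}{3}(\tfrac{3}{2}d_n-\tfrac{1}{2}d_{n-1})+\tfrac{1}{3}d_{n-1}$; convexity of $\|\cdot\|_{V^*}^2$ gives $\|d_n\|_{V^*}^2\le\tfrac{2}{3}\|\tfrac{3}{2}d_n-\tfrac{1}{2}d_{n-1}\|_{V^*}^2+\tfrac{1}{3}\|d_{n-1}\|_{V^*}^2$, and summing and absorbing with the help of \eqref{6} and \eqref{7} yields $\sum_{n=1}^N\|d_n\|_{V^*}^2\le c\tau$. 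Then for any partition $\pi$ with subintervals $(a_i,b_i)$ and associated indices $j_i<k_i$ (so that $\bar{u}_\tau(b_i)-\bar{u}_\tau(a_i)=u_\tau^{k_i}-u_\tau^{j_i}=\sum_{l=j_i+1}^{k_i}d_l$), a telescoping Cauchy--Schwarz inequality gives $\|\bar{u}_\tau(b_i)-\bar{u}_\tau(a_i)\|_{V^*}^2\le(k_i-j_i)\sum_{l=j_i+1}^{k_i}\|d_l\|_{V^*}^2$; disjointness of the subintervals together with $k_i-j_i\le N$ then produces $\sum_i\|\bar{u}_\tau(b_i)-\bar{u}_\tau(a_i)\|_{V^*}^2\le N\sum_{l=1}^N\|d_l\|_{V^*}^2\le N\cdot c\tau=cT$, which is the desired uniform bound. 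Combining this with \eqref{42} yields \eqref{47} and completes the lemma.
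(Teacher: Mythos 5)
Your proposal is correct and follows essentially the same route as the paper: the first six bounds are read off from Lemma \ref{Lemma_apriori} after unpacking the definitions, and for \eqref{47} you control $\sum_n\|u_\tau^n-u_\tau^{n-1}\|_{V^*}^2$ by writing the BDF difference as $\tfrac32 d_n-\tfrac12 d_{n-1}$ and absorbing, exactly as in the paper (which does the absorption via a slightly different algebraic expansion rather than convexity of $\|\cdot\|_{V^*}^2$, to the same effect). The only real difference is that the paper cites the inequality $\|\bar u_\tau\|_{BV^2(0,T;V^*)}^2\le T\tau\sum_i\|(u_\tau^i-u_\tau^{i-1})/\tau\|_{V^*}^2$ from \cite{Kalita2013}, whereas you re-derive it by telescoping Cauchy--Schwarz over an arbitrary partition; your derivation is valid and makes the argument self-contained.
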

\begin{proof}
	The proof of \eqref{42}, \eqref{44}, \eqref{45} and \eqref{46} follows directly from \eqref{3}--\eqref{5}. Now we prove \eqref{43}. Using \eqref{3} we estimate
	\begin{align*}
	&\|u_\tau\|_{L^2(0,T;V)}^2 = \int_0^T \|u_\tau(t)\|^2\,dt = \int_0^\tau \left\|\frac32 \uone-\frac12 \uzero + (\uone-\uzero)\frac{t-\tau}{\tau}\right\|^2\, dt + \\[2mm]
	&\sum_{n=2}^N \int_{(n-1)\tau}^{n\tau} \left\|\frac32\untau-\frac12 \untauminus + \left(\frac32 \untau-2\untauminus + \frac12 \untauminuss\right)\frac{t-n\tau}{\tau}\right\|^2\, dt \le\\[2mm]
	& c\left(\tau \|\uone\|^2 + \tau \|\uzero\|^2 + \tau\sum_{n=2}^N (\|\untau\|^2 + \|\untauminus\|^2 + \|\untauminuss\|)\right) \le c\tau \sum_{n=0}^N \|\untau\|^2\ \le c.	
	\end{align*}
Now we pass to the proof of \eqref{47}. Taking into account \eqref{42} it is enough to estimate $\|\bar{u}_\tau\|_{BV^2(0,T;V^*)}$. We know from \cite{Kalita2013}, that
\begin{equation}
\label{48}\|\bar{u}_\tau\|_{BV^2(0,T;V^*)}\le T\tau \sum_{i=1}^N \left\|\frac{\uitau-\uitauminus}{\tau}\right\|_{V^*}^2.
\end{equation}
Denote be $\delta_i=\uitau-\uitauminus, \ i=1,\ldots,N$, then
\[
\sum_{i=1}^N \|\delta_i\|_{V^*}^2\le \sum_{i=1}^N \left\|\frac{3}{2}\delta_i\right\|_{V^*}^2\le \left\|\frac32\delta_1\right\|_{V^*}^2 + 2\sum_{i=2}^N \left\|\frac32 \delta_i -\frac12\delta_{i-1}\right\|_{V^*}^2 + 2\sum_{i=1}^{N-1} \left\|\frac12\delta_i\right\|_{V^*}^2.
\]
Hence it follows that
\[
\frac12 \sum_{i=1}^{N-1}\|\delta_i\|_{V^*}^2 + \|\delta _N\|_{V^*}^2\le \frac94\|\delta _1\|_{V^*}^2 + 2\sum_{i=2}^N \left\|\frac32\delta_i-\frac12 \delta_{i-1}\right\|_{V^*}^2.
\]
Hence
\[
\sum_{i=1}^N \|\delta_i\|_{V^*}^2 \le c\left(\|\delta_1\|_{V^*}^2 + \sum_{i=2}^N \left\|\frac32\delta_i-\frac12\delta_{i-1}\right\|_{V^*}^2\right),
\]
which implies
\begin{align}
&\label{49}\tau\sum_{i=1}^N\left\|\frac{\uitau-\uitauminus}{\tau}\right\|_{V^*}^2 = \tau\sum_{i=1}^N\left\|\frac{\delta_i}{\tau}\right\|_{V^*}^2\\[2mm]
&\nonumber\le c\left(\tau\left\|\frac{\uone-\uzero}{\tau}\right\|^2_{V^*} + \sum_{i=2}^N\tau \left\|\frac1\tau (\frac32(\uitau-\uitauminus) -\frac12(\uitauminus  -u_{\tau}^{i-2})\right\|^2_{V^*} \right)
 \\[2mm]
&\nonumber=c\left(\tau\left\|\frac{\uone-\uzero}{\tau}\right\|^2_{V^*} +\sum_{i=2}^N\tau \left\|\frac1\tau(\frac32 \uitau - 2\uitauminus +\frac12 u_\tau^{i-2}  )\right\|_{V^*}^2\right).
\end{align}
Applying \eqref{6}, \eqref{7} and \eqref{49} in \eqref{48} completes the proof of \eqref{47}.

\noindent Finally we prove \eqref{47.5}. To this end we observe that
\begin{equation}
\label{50}u_\tau'(t) = \begin{cases}
\frac1\tau(\uone-\uzero) \ \ \mbox{for} \ t\in(0,\tau]\\[2mm]
\frac1\tau(\frac32\untau-2\untauminus + \frac12 \untauminuss) \ \ \mbox{for} \ t\in ((n-1)\tau, n\tau], \,\,n=2,\dots,N.
\end{cases}
\end{equation}
Now
\begin{equation}
\nonumber\|u_\tau'\|_{L^2(0,T;V^*)}^2 = \tau\left\|\frac{\uone-\uzero}{\tau}\right\|_{V^*}^2 + \tau \sum_{n=2}^N
\left\|\frac1\tau\left(\frac32 \untau - 2\untauminus +\frac12 u_\tau^{n-2}  \right)\right\|_{V^*}^2.
\end{equation}
From \eqref{6} and \eqref{7} we get \eqref{47.5}.
\end{proof}

\

At the end of this section, we observe, that due to \eqref{50}, relations \eqref{1}--\eqref{2a} are equivalent with 
\begin{align}
& \label{51}u_\tau'(t) + A\bar{u}_\tau(t) + \iota^* \bar{\xi}_\tau(t) = \bar{f}_\tau(t)  \ \ \mbox{for a.e.} \ t\in (0,T),\\[2mm]
& \label{52}\bar{\xi}_\tau (t)\in \partial J(\iota \bar{u}_\tau(t))  \ \ \mbox{for a.e.} \ t\in (0,T).
\end{align}

\section{Convergence of Rothe method}\label{Sec_5}
In this section we study a convergence of the functions introduced in the previous section to a solution of Problem ${\cal P}$.
For that purpose, we will provide  their convergence to some element in space $\cal V$, and then, passing to the limit in \eqref{51} and \eqref{52} we will show, that the limit element solves the original problem.  
First of all we introduce the Nemytskii operators $\mathcal{A}\colon \cV\to \cV^*$, $\bar{\iota}\colon \cV\to \cU$ corresponding to $A$ and $\iota$, respectively, given by 
\[
(\mathcal{A}v)(t) = Av(t), \ \mbox{and} \ (\bar{\iota}v)(t) = \iota v(t)\qquad \mbox{for all} \ v\in \cV.
\]
Let $\bar{\iota}^*\colon \cU^*\to\cV^*$ denote the adjoint operator to $\bar{\iota}$. Then \eqref{51} and \eqref{52} are equivalent with
\begin{align}
&\label{59}u_\tau'(t) + (\mathcal{A}\bar{u}_\tau)(t) + (\bar{\iota}^*\bar{\xi}_\tau)(t) = \bar{f}_\tau(t), \ \mbox{for a.e.} \ t\in (0,T),\\[2mm]
&\label{60}\bar{\xi}(t)\in \partial J((\bar{\iota}\bar{u}_\tau)(t))  \ \mbox{for a.e.} \ t\in (0,T).
\end{align}

\noindent In what follows we will use the following pseudomonotonicity property of operator ${\cal A}$ (see Lemma 2 in \cite{Kalita}).

\begin{lemma}\label{lemma:Bartosz2}
	Assume that $H(A)$ holds and a sequence $\{v_n\}\subset {\cal V}$ satisfies: $v_n$ is bounded in $M^{p,q}(0,T;V,V^*)$, $v_n\to v$ weakly in ${\cal V}$ and $\limsup_{n\to\infty}\dual{{\cal A}v_n,v_n-v}{\cal V}\leq 0$. Then ${\cal A}v_n\to {\cal A}v$ weakly in ${\cal V^*}$. 
\end{lemma}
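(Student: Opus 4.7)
The statement is a pseudomonotonicity-type property for the Nemytskii lift $\mathcal{A}$ that holds under the additional $M^{p,q}$-boundedness. The strategy blends three tools: weak compactness of $\{{\cal A}v_n\}$ in $\cV^*$ (from the growth $H(A)(ii)$); the Aubin--Lions compactness of Proposition \ref{prop:Bartosz6}, which upgrades $v_n\rightharpoonup v$ in $\cV$ to strong $L^2(0,T;H)$-convergence; and the pointwise pseudomonotonicity of $A$ at $v(t)$. By the subsequence principle, it suffices to show that any subsequence of $\{v_n\}$ admits a further subsequence along which ${\cal A}v_n \rightharpoonup {\cal A}v$ in $\cV^*$. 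Along any such subsequence, $H(A)(ii)$ gives $\|{\cal A}v_n\|_{\cV^*}\le c$, so by reflexivity we extract ${\cal A}v_n \rightharpoonup \chi$ for some $\chi\in\cV^*$; the task is to show $\chi={\cal A}v$.

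Apply Proposition \ref{prop:Bartosz6} with $X_1=V$, $X_2=H$, $X_3=V^*$ (the hypotheses hold by the evolution-triple assumption) to obtain $v_n\to v$ strongly in $L^2(0,T;H)$; a further extraction delivers $v_n(t)\to v(t)$ in $H$ for a.e. $t$, and Fatou applied to $\|v_n(\cdot)\|^2\in L^1(0,T)$ ensures $\liminf_n\|v_n(t)\|<\infty$ a.e. Passing to yet a further subsequence we may assume $\int_0^T g_n\,dt \to L\le 0$, where $g_n(t):=\dual{Av_n(t),v_n(t)-v(t)}{V}$. Combining $H(A)(ii)$, $H(A)(iii)$, and Young's inequality yields
\[
g_n(t) \ge \tfrac{\alpha}{2}\|v_n(t)\|^2 - \beta|v_n(t)|^2 - c\bigl(1+\|v(t)\|^2\bigr),
\]
so the negative parts $g_n^-$ are dominated by the $L^1(0,T)$-convergent family $\beta|v_n|^2+c(1+\|v\|^2)$; Fatou's lemma therefore gives $\int_0^T \liminf_n g_n(t)\,dt \le L\le 0$.

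The decisive step is to convert this integral control into pointwise sign information. Fix $t$ in the full-measure set above. Pick a subsequence $n_k=n_k(t)$ along which $g_{n_k}(t)\to\liminf_n g_n(t)$ and $\|v_{n_k}(t)\|$ stays bounded; extract a sub-subsequence $n_{k_j}$ with $v_{n_{k_j}}(t)\rightharpoonup v(t)$ weakly in $V$ (the weak $V$-limit is forced by the strong $H$-convergence together with $V\subset H$). If $\liminf_n g_n(t)<0$, then $\limsup_j g_{n_{k_j}}(t)<0$, and the pseudomonotonicity of $A$ at $v(t)$ (Definition \ref{Def_pseudomonotone_single} tested with $y=v(t)$) would force $\lim_j g_{n_{k_j}}(t)=0$, a contradiction. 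Hence $\liminf_n g_n(t)\ge 0$ for a.e. $t$, and combined with the integral bound, $\liminf_n g_n(t)=0$ a.e.

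On the same sub-subsequence $n_{k_j}$ we now have $v_{n_{k_j}}(t)\rightharpoonup v(t)$ in $V$ and $g_{n_{k_j}}(t)\to 0$; pseudomonotonicity of $A$ then forces $Av_{n_{k_j}}(t)\rightharpoonup Av(t)$ weakly in $V^*$. Since this identifies $Av(t)$ as the only possible weak-$V^*$ cluster point of $\{Av_n(t)\}$ coming from a $V$-bounded sub-subsequence, the subsequence principle gives pointwise a.e. weak convergence $Av_n(t)\rightharpoonup Av(t)$ in $V^*$ (modulo the initial extractions). Passing to the limit in $\dual{{\cal A}v_n,\phi}{\cV}$ for an arbitrary $\phi\in\cV$ — uniform integrability following from the $\cV^*$-bound on $\{{\cal A}v_n\}$ and Hölder's inequality, so that Vitali's theorem applies — identifies $\chi={\cal A}v$. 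The principal obstacle is the third paragraph's upgrade from \emph{integral} $\limsup$ control to \emph{pointwise} sign information on $g_n(t)$; it crucially combines the coercivity $H(A)(iii)$ (producing the Fatou-admissible lower bound on $g_n$) with the a.e.-in-$t$ strong $H$-convergence supplied by Proposition \ref{prop:Bartosz6}.
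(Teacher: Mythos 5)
Your overall strategy is the right one --- it is essentially the argument behind the cited Lemma 2 of \cite{Kalita} (note the paper does not prove this lemma itself, it only quotes it) --- but there is a genuine gap at the decisive point of your fourth paragraph. What you actually establish is: for a.e.\ $t$ there is a \emph{$t$-dependent} subsequence $n_{k_j}(t)$ along which $v_{n_{k_j}}(t)\rightharpoonup v(t)$ in $V$, $g_{n_{k_j}}(t)\to 0$, and hence $Av_{n_{k_j}}(t)\rightharpoonup Av(t)$. The jump to ``$Av_n(t)\rightharpoonup Av(t)$ along the full (pre-extracted) sequence, by the subsequence principle'' is not justified: to invoke that principle you would need \emph{every} subsequence of $\{Av_n(t)\}_n$ to contain a further subsequence converging weakly to $Av(t)$, but an arbitrary subsequence $n_m$ may have $\|v_{n_m}(t)\|\to\infty$ (so $\{Av_{n_m}(t)\}$ need not even be bounded in $V^*$), or may have $g_{n_m}(t)\to c>0$, in which case pseudomonotonicity gives no information; the relation $\liminf_n g_n(t)=0$ controls only one subsequence. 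Since your final Vitali step needs a single a.e.-convergent sequence of scalar functions $t\mapsto\dual{Av_n(t),\phi(t)}{V}$, the gap propagates to the identification $\chi=\mathcal{A}v$.

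The standard repair is to upgrade $\liminf_n g_n(t)=0$ a.e.\ to $g_n\to0$ in $L^1(0,T)$ and then pass to one further, $t$-independent subsequence along which $g_{n_k}(t)\to0$ for a.e.\ $t$. Concretely: generalized Fatou with your lower bound gives $\liminf_n\int_0^Tg_n\,dt\ge\int_0^T\liminf_ng_n\,dt=0$, so with the hypothesis $\int_0^Tg_n\,dt\to0$; since $\liminf_ng_n(t)\ge0$ a.e.\ forces $g_n^-(t)\to0$ a.e.\ and $g_n^-$ is dominated by the $L^1$-convergent family you already identified, $\int_0^Tg_n^-\,dt\to0$, whence $\int_0^T|g_n|\,dt\to0$. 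Along the resulting subsequence the coercive lower bound makes $\|v_{n_k}(t)\|$ bounded for a.e.\ $t$ for the \emph{whole} subsequence, the cluster-point argument (boundedness of $Av_{n_k}(t)$ from $H(A)$(ii) plus the liminf inequality of Definition \ref{Def_pseudomonotone_single} for all $y$) yields $Av_{n_k}(t)\rightharpoonup Av(t)$ a.e., and your uniform-integrability/Vitali step then goes through verbatim. Two minor points: the boundedness of $\|v_{n_k}(t)\|$ along the subsequence realizing $\liminf_ng_n(t)$ should be derived from the coercive lower bound on $g_n(t)$ itself (your Fatou remark on $\|v_n(\cdot)\|^2$ produces a possibly different subsequence), and the assertion ``pseudomonotonicity forces $Av_{n_{k_j}}(t)\rightharpoonup Av(t)$'' deserves the one-line cluster-point argument, since Definition \ref{Def_pseudomonotone_single} only provides a liminf inequality.
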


\noindent Now we are in a position to provide the main convergence result. 
\begin{theorem}\label{theorem_1}
	There exists a function $u\in\mathcal{W}$ such that 
	\begin{align}
	& \label{53} u_\tau \to u \ \mbox{weakly in} \ \mathcal{W} \ \mbox{and weakly* in} \ L^\infty(0,T;H), \\[2mm]
	&\label{54}\bar{u}_\tau \to u \ \mbox{weakly in} \ \mathcal{V} \ \mbox{and weakly* in} \ L^\infty(0,T;H).
	\end{align}
Furthermore $u$ is a solution of Problem ${\cal P}$.
\end{theorem}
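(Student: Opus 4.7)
The argument proceeds in three stages: weak limit extraction, passage to the limit in the subdifferential relation, and passage to the limit in the operator equation.

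\emph{Stage 1: Subsequence extraction and strong convergence.} Lemma \ref{Lemma_apriori_2} yields uniform bounds: $u_\tau$ is bounded in $\cW\cap L^\infty(0,T;H)$, $\bar u_\tau$ in $\cV\cap L^\infty(0,T;H)\cap M^{2,2}(0,T;V,V^*)$, and $\bar\xi_\tau$ in $\cU^*$. By reflexivity and weak-$*$ sequential compactness we extract a subsequence along which $u_\tau\to u$ weakly in $\cW$ and weakly-$*$ in $L^\infty(0,T;H)$, $\bar u_\tau\to\tilde u$ weakly in $\cV$ and weakly-$*$ in $L^\infty(0,T;H)$, and $\bar\xi_\tau\to\xi$ weakly in $\cU^*$. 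Convergence \eqref{40} together with uniqueness of the weak $\cV^*$-limit gives $\tilde u=u$, establishing \eqref{53}--\eqref{54}. Applying Proposition \ref{prop:Bartosz6} with $X_1=V$, $X_2=Z$, $X_3=V^*$ (with $Z$ as in $H(\iota)$) shows $M^{2,2}(0,T;V,V^*)\subset L^2(0,T;Z)$ compactly, so $\bar u_\tau\to u$ strongly in $L^2(0,T;Z)$; composing with $\iota_2\in\mathcal{L}(Z,U)$ gives $\bar\iota\bar u_\tau\to\bar\iota u$ strongly in $\cU$, and a further extraction produces pointwise a.e.\ convergence.

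\emph{Stage 2: Initial condition and subdifferential inclusion.} The definition of $u_\tau$ evaluated at $t=0$ gives $u_\tau(0)=\tfrac12(\uzero+\uone)$, which by \eqref{0} and Lemma \ref{Lemma_u0u1} converges to $u_0$ strongly in $H$. Combined with weak continuity of the trace $\cW\ni v\mapsto v(0)\in H$, this forces $u(0)=u_0$. For the subdifferential inclusion, $\partial J\colon U\to 2^{U^*}$ has nonempty, closed, convex values and, thanks to the growth condition in $H(J)(ii)$, is upper semicontinuous from $U$ into $w\text{-}U^*$. Proposition \ref{prop:Bartosz7} applied with $F=\partial J$, $x_n=\iota\bar u_\tau$ (converging a.e.) and $y_n=\bar\xi_\tau$ (converging weakly in $L^2(0,T;U^*)$) then delivers $\xi(t)\in\partial J(\iota u(t))$ for a.e.\ $t\in(0,T)$.

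\emph{Stage 3: Passage to the limit in the equation (main obstacle).} To identify $\mathcal{A}\bar u_\tau\to\mathcal{A}u$ weakly in $\cV^*$, Lemma \ref{lemma:Bartosz2} reduces the task to proving
\[
\limsup_{\tau\to 0}\dual{\mathcal{A}\bar u_\tau,\bar u_\tau-u}{\cV}\le 0.
\]
By \eqref{59} this quantity equals $\dual{\bar f_\tau-\bar\iota^*\bar\xi_\tau,\bar u_\tau-u}{\cV}-\dual{u_\tau',\bar u_\tau-u}{\cV}$. The first summand vanishes in the limit by the weak-strong pairings from Stage 1 (in particular $\dual{\bar\iota^*\bar\xi_\tau,\bar u_\tau-u}{\cV}=\dual{\bar\xi_\tau,\bar\iota\bar u_\tau-\bar\iota u}{\cU}$) and $\bar f_\tau\to f$ in $\cV^*$. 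The remaining obstacle, and the essentially new ingredient compared with the implicit Euler analysis of \cite{Kalita2013}, is the inertial bound $\liminf_{\tau\to 0}\dual{u_\tau',\bar u_\tau-u}{\cV}\ge 0$. The plan is to compute $\dual{u_\tau',\bar u_\tau}{\cV}$ explicitly by piecewise integration on the time grid and telescope the BDF2 contributions using the discrete energy identity \eqref{13}; up to a boundary contribution which by \eqref{0} and Lemma \ref{Lemma_u0u1} tends to $-\tfrac12|u_0|^2$, the resulting expression is bounded below by $\tfrac14(|u_\tau^N|^2+|2u_\tau^N-u_\tau^{N-1}|^2)$. The elementary identity $|u_\tau^N|^2+|2u_\tau^N-u_\tau^{N-1}|^2=2|u_\tau(T)|^2+\tfrac12|u_\tau^N-u_\tau^{N-1}|^2\ge 2|u_\tau(T)|^2$, together with the weak $H$-convergence $u_\tau(T)\to u(T)$ (from weak $\cW$-convergence via the continuous trace $\cW\hookrightarrow C([0,T];H)$) and lower semicontinuity of the $H$-norm, gives $\liminf\dual{u_\tau',\bar u_\tau}{\cV}\ge\tfrac12|u(T)|^2-\tfrac12|u_0|^2$. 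Coupled with $\dual{u_\tau',u}{\cV}\to\dual{u',u}{\cV}=\tfrac12|u(T)|^2-\tfrac12|u(0)|^2=\tfrac12|u(T)|^2-\tfrac12|u_0|^2$ (by weak $\cV^*$-convergence of $u_\tau'$, the integration-by-parts formula on $\cW$, and the initial condition from Stage 2), this yields the desired inequality. Lemma \ref{lemma:Bartosz2} then gives $\mathcal{A}\bar u_\tau\to\mathcal{A}u$ weakly in $\cV^*$, and passing to the weak limit in \eqref{59} produces $u'+\mathcal{A}u+\bar\iota^*\xi=f$ in $\cV^*$, which together with Stage 2 shows that $u$ solves Problem $\cP$.
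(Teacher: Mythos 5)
Your proposal is correct, and its overall architecture coincides with the paper's: weak/weak-$*$ extraction from the bounds of Lemma \ref{Lemma_apriori_2}, identification of the two limits via \eqref{40}, the initial condition via $u_\tau(0)=\tfrac12(u_\tau^0+u_\tau^1)$ together with \eqref{0} and Lemma \ref{Lemma_u0u1}, compactness of $M^{2,2}(0,T;V,V^*)\subset L^2(0,T;Z)$ (Proposition \ref{prop:Bartosz6}) to get $\bar\iota\bar u_\tau\to\bar\iota u$ strongly in $\cU$, the Aubin--Cellina argument (Proposition \ref{prop:Bartosz7}) for the inclusion, and Lemma \ref{lemma:Bartosz2} driven by the limsup condition \eqref{65}. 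The only genuine divergence is in the inertial estimate $\liminf\dual{u_\tau',\bar u_\tau-u}{\cV}\ge 0$. The paper splits $\bar u_\tau-u=(u_\tau-u)+(\bar u_\tau-u_\tau)$: the first piece is handled through $\int_0^T\skalar{u_\tau'-u',u_\tau-u}\,dt=\tfrac12|u_\tau(T)-u(T)|^2-\tfrac12|u_\tau(0)-u(0)|^2$ and the strong convergence $u_\tau(0)\to u(0)$, while the second piece is computed explicitly (the linear-in-$t$ factors integrate to zero over each subinterval) and bounded below by $-\tfrac12|u_\tau^1-u_\tau^0|^2\to 0$ via the inequality $\left(\tfrac32a-2b+\tfrac12c,\,a-2b+c\right)\ge\tfrac12|a-b|^2-\tfrac12|b-c|^2$. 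You instead telescope $\dual{u_\tau',\bar u_\tau}{\cV}$ directly with the BDF2 energy identity \eqref{13} and compare against $\dual{u_\tau',u}{\cV}\to\tfrac12|u(T)|^2-\tfrac12|u(0)|^2$; your identity $|u_\tau^N|^2+|2u_\tau^N-u_\tau^{N-1}|^2=2|u_\tau(T)|^2+\tfrac12|u_\tau^N-u_\tau^{N-1}|^2$ checks out (both sides equal $5|u_\tau^N|^2-4(u_\tau^N,u_\tau^{N-1})+|u_\tau^{N-1}|^2$ since $u_\tau(T)=\tfrac32u_\tau^N-\tfrac12u_\tau^{N-1}$), the boundary contribution $\tfrac14|u_\tau^1|^2-\tfrac12|u_\tau^0|^2-\tfrac14|2u_\tau^1-u_\tau^0|^2\to-\tfrac12|u_0|^2$ indeed follows from Lemma \ref{Lemma_u0u1} and \eqref{0}, and weak lower semicontinuity of the $H$-norm at $t=T$ closes the argument. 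Both routes rest on the same discrete energy structure; yours trades the explicit computation of $\dual{u_\tau',\bar u_\tau-u_\tau}{\cV}$ for careful bookkeeping of the boundary terms at $t=0$ and $t=T$, and is equally rigorous.
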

\begin{proof}
	From the bounds obtained in Lemma~\ref{Lemma_apriori_2} there exist $u, u_1\in \cV$, $u_2\in\cV^*$ and $\xi\in\cU^*$, such that
	\begin{align}
	&\label{55}\bar{u}_\tau \to u \ \mbox{weakly in} \ \mathcal{V} \ \mbox{and weakly* in} \ L^\infty(0,T;H), \\[2mm]
	&\label{56}u_\tau \to u_1 \ \mbox{weakly in} \ \mathcal{V} \ \mbox{and weakly* in} \ L^\infty(0,T;H), \\[2mm]
	&\label{57}u_\tau'\to u_2 \ \mbox{weakly in} \ \mathcal{V}^*,\\[2mm]
	&\label{58}\bar{\xi}_\tau \to \xi \ \mbox{weakly in} \ \mathcal{U}^*.
	\end{align}
	
A standard argument shows that $u_1'=u_2$. From \eqref{40} we have $u_\tau-\bar{u}_\tau \to 0$ strongly in $\mathcal{V}^*$,  hence 	$u_\tau-\bar{u}_\tau \to 0$ weakly in $\mathcal{V}^*$. In particular, from \eqref{55} and \eqref{56} $\bar{u}_\tau \to u$  weakly in $\mathcal{V}^*$ and $u_\tau\to u_1$ weakly in $\mathcal{V}^*$, hence $\bar{u}_\tau-u_\tau\to u-u_1$ weakly in $\mathcal{V^*}$. From uniqueness of weak limit we have $u-u_1=0$, so $u=u_1$.\\
Now we will show that $u$ is a solution of Problem ${\cal P}$. To this end we deal with initial condition first. From \eqref{56} and \eqref{57} we have $u_\tau\to u$ weakly in $\cW$. Since the embedding $\cW\subset C(0,T;H)$ is continuous, it follows that for a subsequence $u_\tau\to u$ weakly in $C(0,T;H)$, so in particular 
\begin{align}\label{58_a}
u_\tau(0)\to u(0) \ \mbox{weakly in} \ H.
\end{align}
On the other hand, by the definition of function $u_\tau$, we have
\begin{align*}
u_\tau(0)=\frac12u_\tau^1+\frac12 u_\tau^0=\frac12(u_\tau^1-u_\tau^0)+u_\tau^0.
\end{align*}
Hence, by Lemma \ref{Lemma_u0u1} and by \eqref{0}, we get $u_\tau(0)\to u_0$ strongly in $H$, so also weakly in $H$. Comparing it with (\ref{58_a}) we conclude, from uniqueness of the weak limit in $H$ that
\begin{align}\label{58_b}
u_\tau(0)\to u_0=u(0) \ \mbox{strongly in} \ H.
\end{align}

It follows from \eqref{59} that
\begin{equation}
\label{61}\langle u_\tau',v\rangle_{\cV^*\times \cV} + \langle \mathcal{A}\bar{u}_\tau, v\rangle_{\cV^*\times \cV} + \langle \bar{\iota}^*\bar{\xi}_\tau, \bar{\iota}v \rangle_{\cV^*\times \cV}= \langle \bar{f}_\tau, v \rangle_{\cV^*\times \cV} \ \mbox{for all} \ v\in \cV.
\end{equation}
We pass to the limit with \eqref{61} as $\tau\to 0$.
From \eqref{57} we have
\begin{equation}
\label{62}\langle u_\tau',v\rangle_{\cV^*\times \cV} \to \langle u',v\rangle_{\cV^*\times \cV}.
\end{equation}
From \eqref{58} we have
\begin{equation}
\label{63}\langle \bar{\xi},\bar{\iota}_\tau v\rangle_{\cU^*\times \cU} \to \langle \xi, v \rangle_{\cU^*\times \cU}.
\end{equation}
By standard arguments
\begin{equation}
\label{64}\langle \bar{f}_\tau, v \rangle_{\cV^*\times \cV} \to \langle f,v \rangle_{\cV^*\times \cV}.
\end{equation}
Then, we have 
\begin{equation}
\label{64.5}\bar{f}_\tau \to f \ \mbox{strongly in} \ \cV^*. 
\end{equation}
In the next step we will show, that
\begin{equation}
\label{65}\limsup \langle \mathcal{A}\bar{u}_\tau, \bar{u}_\tau-u\rangle_{\cV^*\times \cV} \le 0.
\end{equation}
From \eqref{61}
\begin{align*}
& \langle \A \bar{u}_\tau,\bar{u}_\tau -u\rangle_{\cV^*\times \cV} = \langle \bar{f}_\tau, \bar{u}_\tau-u\rangle_{\cV^*\times \cV} -\langle u_\tau', \bar{u}_\tau-u \rangle_{\cV^*\times \cV} - \langle \bar{\xi}_\tau,\bar{\iota} \bar{u}_\tau-\bar{\iota} u\rangle_{\cU^*\times \cU}.
\end{align*}
so 
\begin{align}
\nonumber\limsup \langle \A \bar{u}_\tau, \bar{u}_\tau-&u\rangle_{\cV^*\times \cV} \le \limsup \langle \bar{f}_\tau, \bar{u}_\tau-u\rangle_{\cV^*\times \cV} \\[2mm]
&\label{66}-\liminf \langle u_\tau',\bar{u}_\tau-u\rangle_{\cV^*\times \cV} - \liminf\langle \bar{\xi}_\tau, \bar{\iota}\bar{u}_\tau-\bar{\iota}u\rangle_{\cU^*\times \cU}. 
\end{align}
From \eqref{64.5} and \eqref{54} we have
\begin{equation}
\label{67}\lim \langle \bar{f}_\tau, \bar{u}_\tau-u\rangle_{\cV^*\times \cV} =0.
\end{equation}
From \eqref{47} and Proposition \ref{prop:Bartosz6} we have for a subsequence 
\begin{equation}
\label{68}\bar{\iota}_1\bar{u}_\tau\to \bar{\iota}_1 u \ \mbox{in} \ L^2(0,T;Z),
\end{equation}
where $\bar{\iota}\colon\cV\to L^2(0,T;Z)$ is the Nemytskii operator corresponding to $\iota_1$. We have
\begin{align}
&\nonumber\|\bar{\iota}\bar{u}_\tau-\bar{\iota}u\|_{\cU}^2 = \int_0^T\|\iota\bar{u}_\tau(t) -\iota u(t)\|_U^2\, dt = \int_0^T\|\iota_2\circ \iota_1(\bar{u}_\tau(t) -u(t))\|_U^2\, dt \\[2mm]
&\label{69} \le \|\iota_2\|^2_{{\cal L}(Z,U)} \int_0^T\|\iota_1(\bar{u}_\tau(t) -u(t))\|_Z^2 \, dt = \|\iota_2\|^2_{{\cal L}(Z,U)} \|\bar{\iota}_1 \bar{u}_\tau-\bar{\iota_1}u\|^2_{L^2(0,T;Z)}.
\end{align}
From \eqref{68} and \eqref{69} we have
\begin{equation}
\label{70}\bar{\iota}\bar{u}_\tau\to \bar{\iota}u \ \mbox{in} \ \cU.
\end{equation}
From \eqref{58} and \eqref{70} we have
\begin{equation}
\label{70.5}\lim\langle \bar{\xi}_\tau, \bar{\iota}\bar{u}_\tau-\bar{\iota}u\rangle_{\cU^*\times \cU} =0.
\end{equation}
Now we will show that
\begin{equation}
\label{71}\liminf \langle u_\tau', \bar{u}_\tau-u\rangle_{\cV^*\times \cV} \ge 0.
\end{equation}
First of all we will deal with two parts separately, namely
\begin{equation}
\label{72}\langle u_\tau', \bar{u}_\tau-u\rangle_{\cV^*\times \cV} = \langle u_\tau', u_\tau-u\rangle_{\cV^*\times \cV} + \langle u_\tau',\bar{u}_\tau -u_\tau\rangle_{\cV^*\times \cV}.
\end{equation}
To deal with the first term, we will use \eqref{56} and the convergence $|u_\tau(0)-u(0)|\to 0$ (see (\ref{58_b})). Hence
\begin{align}
&\nonumber\liminf \langle u_\tau', u_\tau-u\rangle_{\cV^*\times \cV} \ge \liminf \langle u_\tau'-u',u_\tau-u\rangle_{\cV^*\times \cV}\\[2mm]  
&\nonumber + \liminf \langle u',u_\tau-u\rangle_{\cV^*\times \cV}=\liminf\int_0^T \langle u_\tau'-u',u_\tau-u\rangle_{V^*\times V}\, dt\\[2mm]
&\nonumber = \liminf\left(\half|u_\tau(T)-u(T)|^2 -\half|u_\tau(0)-u(0)|^2\right)\ge \\[2mm]
&\nonumber \ge \liminf \half |u_\tau(T)-u(T)|^2 +\liminf\left(-\half|u_\tau(0)-u(0)|^2\right)\\[2mm]
&\label{73}=\liminf \half |u_\tau(T)-u(T)|^2 \ge 0. 
\end{align}
Now we deal with the second part
\begin{align}
&\nonumber\langle u_\tau',\bar{u}_\tau-u_\tau \rangle_{\cV^*\times \cV} = -\int_0^\tau \frac1\tau\left(  \uone-\uzero,\uone-\uzero \right)\frac{t-\half\tau}{\tau}\, dt  \\[2mm]
&\nonumber-\sum_{n=2}^N\int_{(n-1)\tau}^{n\tau}\frac1\tau \left(\frac32 \untau - 2\untauminus +\half \untauminuss,\frac32 \untau - 2\untauminus +\half \untauminuss\right)\frac{t-(n-\half)\tau}{\tau}\,dt \\[2mm]
&\nonumber-\sum_{n=2}^N\int_{(n-1)\tau}^{n\tau}\frac1\tau \left(\frac32 \untau - 2\untauminus +\half \untauminuss, \frac14(\untau-2\untauminus+\untauminuss) \right).
\end{align}
We observe that
\begin{equation}
\nonumber\int_0^\tau \frac1\tau\left(  \uone-\uzero,\uone-\uzero \right)\frac{t-\half\tau}{\tau}\, dt =0,\end{equation}
\begin{equation}
\nonumber\int_{(n-1)\tau}^{n\tau}\frac1\tau \left(\frac32 \untau - 2\untauminus +\half \untauminuss, \frac32 \untau - 2\untauminus +\half \untauminuss\right)\frac{t-(n-\half)\tau}{\tau}\, dt =0.
\end{equation}
Hence, 
\begin{equation}
\label{76}\langle u_\tau', \bar{u}_\tau -u_\tau\rangle_{\cV^*\times \cV} = \frac14\sum_{n=2}^N \left(\frac32 \untau-2\untauminus +\frac12\untauminuss, \untau-2\untauminus + \untauminuss\right).
\end{equation}
We now use inequality 
\begin{align*}
\left(\frac32a-2b+\frac12 c,a-2b+c\right)&=\frac32|a-b|^2-2(a-b,b-c)+\frac12|b-c|^2\\[2mm]
&\ge \frac12|a-b|^2-\frac12 |b-c|^2
\end{align*}
for all $a,b,c\in H$. Then
\begin{align}
&\nonumber\sum_{k=2}^N \left(\frac32 \uktau-2\uktauminus +\frac12\uktauminuss, \uktau-2\uktauminus + \uktauminuss\right) \ge\\[2mm]
&\nonumber \sum_{k=2}^N \frac12 \left|\uktau-\uktauminus\right|^2 -\frac12\left|\uktauminus-\uktauminuss\right|^2 = \\[2mm]
&\label{77}\frac12 \left|u_\tau^N-u_\tau^{N-1}\right|^2 - \frac12 \left|\uone-\uzero\right|^2 \ge -\frac12 \left|\uone-\uzero\right|^2.
\end{align}
Using \eqref{76}, \eqref{77} and Lemma~\ref{Lemma_u0u1} we conclude that $\liminf\langle u_\tau', \bar{u}_\tau -u_\tau\rangle_{\cV^*\times \cV}\ge 0$, which together with \eqref{72} and \eqref{73}  completes the proof of \eqref{71}. Now \eqref{65} follows from \eqref{66}, \eqref{67}, \eqref{70.5} and \eqref{71}.

From \eqref{47}, \eqref{55}, \eqref{65} and Lemma 1 of \cite{Kalita2013} we get
\begin{equation}
\label{78}\A\bar{u}_\tau \to \A u \ \mbox{weakly in} \ \cV^*.
\end{equation}
Using \eqref{62}--\eqref{64} and \eqref{78} we can pass to the limit in \eqref{61} and get 
\begin{equation}
\nonumber\langle u' + \A u + \bar{\iota^*} \xi -f, v\rangle_{\cV^*\times \cV}=0
 \end{equation}
 which, by standard technique shows that $u$ satisfies the first relation of Problem $\cal P$.\\
 It remains to pass to the limit with (\ref{60}).  
 First we recall that the multifunction $\partial J\colon U\to U^*$
 has nonempty, closed and convex values. Furthermore,
 by Proposition 5.6.10 of \cite{DMP2}, it is also upper semicontinuous from $U$ (equipped
 with the strong topology) into $U^*$ (equipped with the weak topology). Hence, from \eqref{58}, \eqref{70} and Proposition \ref{prop:Bartosz7} we have 
 \begin{equation}
 \nonumber\xi(t)\in \partial J(\iota u(t)) \ \mbox{a.e.} \ t\in (0,T),
 \end{equation}
 which completes the proof of the theorem. 
\end{proof}

\section{Example}\label{sec_example}
In this short section we provide an example of particular problem, for which our theoretical result can be applied.\\

\noindent Let $\Omega\subset\real^n$, $n\in\nat_+$, be an open bounded domain with a locally Lipschitz boundary $\partial \Omega$, that consists of two disjoint parts $\Gamma_N$ and $\Gamma_C$. Let $\nu$ denote the unit outward normal vector at the boundary $\partial \Omega$. Finally let $T>0$. With this notation we consider the following problem.

\vskip3mm \noindent {\bf Problem ${\cal Q}$}. {\it Find $u\colon [0,T] \times \Omega\to\real$ such that
\begin{align*}
\begin{cases}
u_t(t,x)-\Delta u(t,x)=f_0(t,x)\qquad\qquad  \mbox{for}\,\,\,\, (t,x)\in[0,T]\times\Omega,\\[1mm]
\frac{\partial u(t,x)}{\partial \nu}=f_N(t,x)\qquad\qquad\quad\quad\qquad\,\,\,\,  \text{for}\,\,\,\, (t,x)\in[0,T]\times\Gamma_N,\\[1mm]
-\frac{\partial u(t,x)}{\partial \nu}\in\partial j (u(t,x))\qquad\qquad\qquad\,\,\,\,\,  \text{for}\,\,\,\, (t,x)\in[0,T]\times\Gamma_C,\\[1mm]
u(0,x)=u_0\qquad\qquad\quad\qquad\quad\qquad\,\,\,\,\,\,  \text{for}\,\,\,\, x\in\Omega.
\end{cases}
\end{align*}	
}

We impose the following assumptions on the data of Problem ${\cal Q}$. \\

\noindent $H(f_0):$\,\, $f_0\in L^2((0,T)\times\Omega)$,\\[2mm]
\noindent $H(f_N):$\,\, $f_N\in L^2((0,T)\times\Gamma_N)$,\\ [2mm]
\noindent $H(j):$ \,\, $j\colon\real\to\real$ is a function such that
\begin{itemize}
	\item[(i)] $j$ is locally Lipschitz,
	\item[(ii)] $|\xi|\leq d_j(1+|s|)$ for all $\xi\in\partial j(s), s\in\real$, with $d_j>0$.
\end{itemize}
Now we pass to the variational formulation of {Problem ${\cal Q}$}. To this end we consider the spaces $H=L^2(\Omega)$, $V=H^1(\Omega)$ and $U=L^2(\Gamma_C)$ with the norms given by 
\begin{align*}
&|u|^2:=\|u\|^2_H=\int_{\Omega}|u(x)|^2\,dx\,\, \text{ for\, all}\,\,u\in H,\\
&\|u\|^2:=\|u\|^2_V=|u|^2+\int_{\Omega}|\nabla u(x)|^2\,dx \,\, \text{ for\, all}\,\,u\in V,\\
&\|u\|^2_U=\int_{\Gamma_C}|u(x)|^2\,dx\,\, \text{ for\, all}\,\,u\in U.
\end{align*}  
We keep notation from Section \ref{Sec_3} concerning scalar product $(u,v)$ in $H$, duality pairing $\skalar{u,v}$ in $V$ and $\dual{u,v}{U}$ in $U$. Let $\gamma\colon V\to U$ denote the trace operator, so  $\gamma u$ stands for the trace of $u\in V$ at the boundary $\partial\Omega$. Hence, the symbol $u$ in the boundary conditions of Problem ${\cal Q}$ should be understood in the sense of trace. \\
We introduce the operator $A\colon V\to V^*$ given by 
\begin{align*}
\skalar{Au,v}=\int_{\Omega}\nabla u(x)\cdot\nabla v(x)\,dx\quad\text{for\, all}\,\,u,v\in V 
\end{align*}  
and the function $f\colon(0,T)\to V^*$ given by 
\begin{align*}
\skalar{f(t),v}=\int_{\Omega}f_0(t,x)\cdot v(x)\,dx+\int_{\Gamma_N}f_N(t,x)v(x)\,d\Gamma\quad \text{for\,all}\,\, v\in V,\, \text{a.e.} \  t\in (0,T).
\end{align*}
With this  notation we formulate a variational problem corresponding to Problem ${\cal Q}$. It reads as follows.
\vskip3mm \noindent {\bf Problem ${\cal Q}_V$}. {\it Find $u\in \cW$  such that
	\begin{align*}
	\begin{cases}
	(u_t,v)+ \skalar{Au(t),v}+\int_{\Gamma_C}j^\circ(\gamma u(t,x);\gamma v(x))\,d\Gamma\geq\skalar{f(t),v}\quad \text{for\,\,all}\,\, v\in V,\, a.e. t\in (0,T),\\
	u(0)=u_0.
	\end{cases}
	\end{align*}	
}
Note that Problem ${\cal Q}_V$ has a form of parabolic hemivariational inequality. Formally it has been obtained from Problem ${\cal Q}$ by multiplying the first equation by a test function $v\in V$, integrating the resulting equation over $\Omega$, applying boundary conditions and using Definition \ref{Def_Clarke_subdifferential}. If a function $u$ is a solution of Problem${\cal Q}_V$, then it is said to be a weak solution of Problem ${\cal Q}$.  \\
Now we introduce a functional $J\colon U\to\real$ given by 
\begin{align*}
J(u)=\int_{\Gamma_C}u(x)\,d\Gamma\quad \text{for\, all}\,\,u\in U.
\end{align*} 
We are in a position to formulate more general problem. 
\vskip3mm \noindent {\bf Problem $\tilde{\cal Q}_V$}. {\it Find $u\in \cW$  such that
	\begin{align*}
	\begin{cases}
	u_t(t)+ Au(t)+\gamma^*\partial J(\gamma u(t))=f(t)\quad \text{for\,\,all}\,\, a.e.\, t\in (0,T).\\
	u(0)=u_0.
	\end{cases}
	\end{align*}	
}
We observe, that every solution of  Problem $\tilde{\cal Q}_V$ is also a solution of Problem ${\cal Q}_V$. In fact, let $u$ be a solution of $\tilde{\cal Q}_V$ and let $v\in V$. Then, there exists $\eta\in \cU^*$, such that 
\begin{align}\label{example_2}
(u_t,v)+ \skalar{Au(t),v}+\dual{\eta(t),\gamma v}{U}=\skalar{f(t),v}\quad\text {for a.e.}\  \ t\in (0,T).
\end{align}
and 
\begin{align*}
\eta(t)\in\partial J(\gamma u(t))\quad \text {for a.e.} \ \, t\in (0,T).
\end{align*}
Hence, by Theorem 3.47 of \cite{MOSBOOK}, it follows that for a.e. $t\in(0,T)$, $\eta(t)$ can be treated as a function $\eta(t)\colon\Gamma_C\to\real$, such that $\eta(t)\in L^2(\Gamma_C)$ and it satisfies
\begin{align}
&\label{example_1}\dual{\eta(t),\gamma v}{U}=\int_{\Gamma_C}\eta(t,x)\cdot v(x)\,d\Gamma,\\ 
&\label{example_3}\eta(t,x)\in\partial j (u(t,x)) \quad a.e.\, x\in \Gamma_C. 
\end{align}
Applying Definition, \ref{Def_Clarke_subdifferential} we get from (\ref{example_1}) and (\ref{example_3})
\begin{align*}
\dual{\eta(t),\gamma v}{U}\leq\int_{\Gamma_C}j^\circ(\gamma u(t,x);\gamma v(x))\,d\Gamma.
\end{align*}
Combining it with (\ref{example_2}), we get 
\begin{align*}
(u_t,v)+ \skalar{Au(t),v}+\int_{\Gamma_C}j^\circ(\gamma u(t,x);\gamma v(x))\,d\Gamma\geq\skalar{f(t),v}.
\end{align*}
Keeping in mind the initial condition, we claim that $u$ solves Problem  ${\cal Q}_V$. Hence, in order to get a weak solution of Problem ${\cal Q}$, it is enough to solve Problem $\tilde{\cal Q}_V$. That's why we will concentrate on solvability of the last problem. \\
First of all we notice that it corresponds to Problem $\cP$ of Section \ref{Sec_3}, with $\iota=\gamma$.  
Moreover, we will show, that Theorem \ref{theorem_1} can be applied in our case. To this end we will examine all assumptions stated in Section \ref{Sec_3}.\\
First we notice, that operator $A$ is pseudomonotone, as it is linear and monotone. Hence $H(A)$(i) holds. Moreover, an elementary calculation shows that for all $v\in V$ we have
\begin{align*}
\|Av\|_{V^*}\leq\left(\int_{\Omega}|\nabla v(x)|^2\,dx\right)^{\frac{1}{2}}\leq \|v\|.
\end{align*}     
Hence, the assumption $H(A)$(ii) holds with the constants $a=0$, $b=1$. It is also clear that for all $v\in V$ one has  $\skalar{Av,v}=\|v\|^2-|v|^2$, so $H(A)$(iii) holds with $\alpha=1$ and $\beta=1$.\\
Now we study the properties of functional $J$. By Theorem 3.47 (iii) of \cite{MOSBOOK}, it is locally Lipschitz, hence $H(J)$(i) holds. Moreover, an elementary calculation shows that $J$ satisfies $H(J)$(ii) with the constant $d=\sqrt{2}d_j\max\{1,\sqrt{|\Gamma_C|}\}$. \\
Now let us show that the trace operator $\gamma$ satisfies assumption corresponding to $H(\iota)$. Taking $Z=H^\delta(\Omega)$ with $\delta\in[\frac{1}{2},1]$, we notice that the embedding $V\subset Z$ is compact. Hence, letting $\iota_1\colon V\to Z$ be the identity mapping, we claim that it is compact. Moreover, we define an auxiliary trace mapping $\iota_2\colon Z\to U$. Then, clearly $\gamma=\iota_2\circ\iota_1$ and the assumption $H(\iota)$ holds fo $\iota=\gamma$.\\
It is also clear that assumption $H(f)$ is a consequence of $H(f_0)$ and $H(f_N)$. Now it is enough to impose $u_0\in H$ to fulfil the last assumption $H(0)$.\\ 

In this way we have shown that Theorem \ref{theorem_1} is applicable in our case, namely the weak solution of Problem ${\cal Q}$ can be approximated by means of a double step Rothe scheme described in Section \ref{Sec_4}. \\ 
        
At the end of this section we provide a simple example of a function $j\colon\real\to\real$ which satisfies assumptions $H(j)$. Let it be defined  by the formula        
\begin{align*}
j(s)=\begin{cases}
0\qquad\hspace{2.6cm} \text{for}\,\,\,s<0\\
-de^{-s}+\frac{1}{2}ds^2+d\qquad\text{for}\,\,\,s\geq 0.
\end{cases}
\end{align*}
Then the Clarke subdifferential of $j$ is given by 
\begin{align*}
\partial j(s)=\begin{cases}
0\qquad\hspace{1.65cm} \text{for}\,\,\,s<0\\
[0,d]\qquad\hspace{1.1cm} \text{for}\,\,\,s=0\\
de^{-s}+s\qquad\hspace{.5cm}\text{for}\,\,\,s\geq 0.
\end{cases}
\end{align*}
It is easy to check that the function $j$ satisfies assumptions $H(j)$(i)-(ii). In particular, the constant $d$ from the assumption $H(j)$(ii) and the constant $d$ used in the formula of $j$ coincide.

\end{document}